\newtheorem{Lemma}{Lemma}
\newtheorem{theorem}{Theorem}
\numberwithin{Corollary}{section}
\titleformat*{\subsection}{\bfseries}
\titleformat*{\subsubsection}{\bfseries}
\journal{Proceedings of the National Academy of Sciences, India Section A: Physical Sciences}
\begin{document}
\begin{frontmatter}

\title{Numerical and approximate solutions for two-dimensional hyperbolic telegraph equation via wavelet matrices}
		\author[IITK]{Vijay Kumar Patel\corref{cor1}}
		\ead{vijaybhuiit@gmail.com}
		\author[IITK]{Dhirendra Bahuguna}
		\ead{dhiren@iitk.ac.in}
		\cortext[cor1]{Corresponding author}
		\address[IITK]{Department of Mathematics and Statistics, Indian Institute of Technology kanpur, India}
\begin{abstract}
The present article is devoted to developing the Legendre wavelet operational matrix method (LWOMM) to find the numerical solution of two-dimensional hyperbolic telegraph equations (HTE) with appropriate initial time boundary space conditions. The Legendre wavelets series with unknown coefficients have been used for approximating the solution in both of the spatial and temporal variables. The basic idea for discretizing two-dimensional HTE is based on differentiation and integration of operational matrices. By implementing LWOMM on HTE, HTE is transformed into algebraic generalized Sylvester equation. Numerical experiments are provided to illustrate the accuracy and efficiency of the presented numerical scheme. Comparisons of numerical results associated with the proposed method with some of the existing numerical methods confirm that the method is easy, accurate and fast experimentally. Moreover, we have investigated the convergence analysis of multidimensional Legendre wavelet approximation. Finally we have compared our result with research article of Mittal and Bhatia (see \cite{mittal_2014}). 
\end{abstract} 
\begin{keyword}
			Telegraph equation \sep Legendre wavelets \sep Operational matrices \sep Kronecker multiplications \sep BICGSTAB method.
\end{keyword}
\end{frontmatter}
\section*{Applications}
The study of the electric signal in dispersive wave propagation, a transmission line, pulsating blood flow in arteries and random motion of bugs along a hedge are amongst a host of physical and biological phenomena which can be described by the telegraph partial differential equation (TPDE). So, the TPDE and its solutions are very important in many areas of applications.

\section{Introduction} \label{s2}
Partial differential equations (PDEs) are of widespread interest because of their connection with phenomena in the physical world. Nowadays most of the physical problems can be described in the form of mathematical models and these models are consist of PDEs. PDEs are observed in many fields of engineering and applied sciences. Among these PDEs, hyperbolic PDEs play an important role in several areas of engineering and  applied sciences. The propagation of signal (digital and analog) through media, the propagation of electromagnetic waves in the earth–ionosphere waveguide \cite{kirillov2002solving}, mechanical wave \cite{dehghan2008combination}, an ecological and cosmological phenomena are modeled using hyperbolic PDEs \cite{wollkind1986applications}. Recently, many methodologies have been investigated to find the numerical solution of the telegraph equation due to their universal applications in the area of applied mathematics. Goldstien was the first who derived the one-dimensional telegraph equation with probabilistic argument. He proved that a particle which moves forward  and backward direction with speed c satisfied hyperbolic one-dimensional telegraph equation (for instant see \cite{goldstein1951diffusion}). Hyperbolic telegraph equations are commonly used in wave phenomena \cite{weston_1993} and also wave propagation of electric signal in \cite{jordon_1999},  . And also, an effort has been taken for the extended result for two-dimensional case of random planar motion (see for instant \cite{tolubinsky1969theory,bartlett1978note,orsingher1985hyperbolic}). Specially in two dimensional hyperbolic PDEs such as telegraph equations in real-world applications, we should impose some boundary limitations on the two-dimensional space variable. Neglecting spatial dimension in multidimensional can affect the accuracy of the model for describing the chemical and physical events. Therefore, multidimensional PDEs are considered by scientist and engineer to model and simulate the aforementioned events. Among the PDEs model, parabolic PDEs model can describe a phenomena with the help of some physical laws, but in some of the model, it may be better modeled by hyperbolic PDEs. However, few articles are devoted to the implementation of the analytical methods for solving the telegraph equation with boundary conditions (see \cite{salkuyeh_2011} ). Additionally in different circumference, direct symbolic differentiation and integration in analytical schemes make them time-consuming. This is one more disadvantage of the analytical method for solving multidimensional PDEs. Because of these type of disadvantage in PDEs, the robust and efficient tool should have a look to compute the numerical solution of the problems.

In this article, we consider the more general form of two dimensional hyperbolic telegraph equation as follows
\begin{equation}\label{1}
\begin{split}
\frac{\partial^{2} \Phi(\eta,\xi,t)}{\partial 
t^{2}}+2\lambda_{1}\frac{\partial \Phi(\eta,\xi,t)}{\partial
t}+\lambda_{2}^{2} \Phi(\eta,\xi,t)=\frac{\partial^{2} \Phi(\eta,\xi,t)}{\partial
\eta^{2}}+\frac{\partial^{2} \Phi(\eta,\xi,t)}{\partial
\xi^{2}}\\+F(\eta,\xi,t),~~~(\eta,\xi,t)\in \Omega \times (0,T],
\end{split}
\end{equation}
  where $\Omega=[0,1]\times[0,1]\times[0,1] $ and $t \in (0,T] $.\par
The initial conditions  and the Dirichlet boundary conditions are 
\begin{equation}\label{2}
\left\{ \begin{array}{l} \Phi(\eta,\xi,0)=f_{1}(\eta,\xi), \vspace{0.2cm}\\
\Phi_{t}(\eta,\xi,0)=f_{2}(\eta,\xi), \vspace{0.2cm} 

\end{array}\right.
\end{equation}
and 
\begin{equation}\label{3}
\left\{ \begin{array}{l} \Phi(0,\xi,t)=f_{3}(\xi,t),~\Phi(1,\xi,t)=f_{4}(\xi,t), \vspace{0.2cm}\\
\Phi(\eta,0,t)=f_{5}(\eta,t),~\Phi(\eta,1,t)=f_{6}(\eta,t).~~~~~(\eta,\xi,t)\in
\lambda_{1}\times (0,T] \vspace{0.2cm}
\end{array}\right.
\end{equation}
respectively, where $\lambda_{1}$ and $\lambda_{2}$ are constants. \par

Nowadays, researchers are focused on the numerical method for solving this type of PDEs. Various numerical methods have been developed to solve hyperbolic PDEs and the above telegraph equations (\ref{1})-(\ref{3}) have been considered by some researchers for numerical solutions. Dehghan and Ghesmati have used two meshless methods namely meshless local Petrov-Galerkin (MLPG) and meshless local weak-strong (MLWS) to solve two-dimensional telegraph equations (\ref{1})-(\ref{3}) in \cite{dehghan_2010}. In \cite{gao2007unconditionally}, a semi discretization method which is unconditionally stable has been proposed by F. Gao and C. Chi . R.K. Mohanty developed three-level unconditionally stable schemes based on the finite difference (\cite{mohanty2004unconditionally,mohanty2005unconditionally}). In  \cite{dehghan2011use}, Yousefi and Dehghan used He's variational iteration approach to solve one-dimensional telegraph equation numerically. However, In \cite{mohebbi2008high}, authors used compact finite difference approximation for space derivative. In \cite{lakestani2010numerical}, M. Lakestani and B. N. Saray developed operational matrix approach based on interpolating scaling functions to solve the telegraph equation. Shokri and Dehghan  explored radial basis function in \cite{dehghan2008numerical}, a meshless method in \cite{dehghan2009meshless} and a meshfree technique in \cite{dehghan2012method}. Moreover in \cite{saadatmandi2010numerical}, Saadatmandi and Dehghan  approximate the solution in terms of shifted Chebyshev polynomials to get the numerical solution. In \cite{bulbul2011taylor} authors proposed Taylor matrix method for the numerical solution of telegraph equation. Also, in \cite{heydary} authors implement Chebyshev wavelet method to solve one-dimensional telegraph equation numerically. In the progress of solution of telegraph equation, various meshless techniques have been developed. Dehgan and Mohebbi established a higher-order implicit collocation method in \cite{dehghan2009high}. An unconditionally stable implicit scheme has been presented by Mohanty et al.\cite{mohanty2002unconditionally}. Ding and Zhang \cite{ding2009new} developed a fourth-order compact finite difference scheme.  Jiwari et al. \cite{jiwari2012differential} proposed a numerical technique based on polynomial differential quadrature method (PDQM). Mittal and Bhatia proposed cubic B-spline collocation method in \cite{rachna} and differential quadrature method based on modified B-spline with space discretization in \cite{mittal_2014} . Recently, an operational matrix approach based on Bernoulli polynomials is proposed by S. Singh et al.\cite{singh2018application}. The methods based on operational matrices have proved to be very effective. The main advantage of using operational matrices is the sparsity of the operational matrices. In the numerical analysis, operational matrices based on approximation technique provides a powerful technique for approximating solutions of partial differential equations which is arising from mathematical modelling (for instant see \cite{singh2017numerical,patel_2017_1,patel_2017_2,patel_2017_3,patel_2017_4}). The motivation and philosophy behind the operational matrix approach is that it have some characteristic as follows:
\begin{itemize}
\item It reduces singularities from the proposed mathematical problems in an easy way.

\item It does not only simplify the proposed problem but also speed up the computation.

\item It is transformed the PDEs into the algebraic system.

\item The method is computer-oriented, thus solving higher-order PDEs becomes a matter of dimension increasing.

\item The solution is convergent, even though the size of increment may be large.
\end{itemize}
The basic idea of an operational matrix technique is as follows:
\begin{itemize}
\item The unknown function or its derivatives with respect to time(or space) in the given
PDEs are approximated by linear combinations of the orthonormal
basis functions and truncating them up to optimal levels. 

\item In this article, the operational matrix approximation the proposed problem converted into simple
algebraic equations whose solutions can be obtained using Sylvester's approach that gives approximate solutions for PDEs.
\end{itemize}

Nowadays, ''Wavelets'' has been an exceptionally famous subject of discussions in numerous zones of logical and designing angles. Some view wavelets as another reason for speaking to capacities, some consider it as a system for time-recurrence investigation, and others consider it another numerical subject. Obviously, every one of them are right, since ''wavelets'' is a flexible apparatus with rich scientific substance and extraordinary potential for applications. In any case, as this subject is still amidst quick improvement, it is certainly too soon to give a unified presentation. The subject of wavelets has had a spot in the core of engineering, science, and mathematics. Wavelet  is an energizing new technique for taking care of troublesome issues in engineering, mathematics, and physics, with present day applications as differing as wave proliferation, data compression, image processing, signal processing, computer graphics, the detection of aircraft and submarines,  medical image technology and pattern recognition. As the contribution of wavelets by Chebyshev , Bernoulli, and Legendre wavelets (for instant see \cite{singh2017numerical,patel_2017_1,patel_2017_2,patel_2017_3,patel_2017_4})based solution of partial differential equations picked up momentum in attractive way.  Favourable circumstances of Wavelets bases over operational matrix strategy have prompted enormous application in science and engineering. The exact solution proves the accuracy and efficiency of wavelets operational matrix methods with the good agreement of mathematical results. Likewise, the wavelets operational matrix strategy is simple, efficient and delivers extremely precise numerical outcomes in impressively modest number of basis function and hence reduces computational exertion. Additionally, the technique is easy to apply for multidimensional problems.

So, firstly we transform equations (\ref{1})-(\ref{3}) into its equivalent construction of integro-PDEs which consists of both initial and boundary conditions and therefore, we can be solved in a more suitable matter. Then by using the operational matrices of integration and differentiation of Legendre wavelets together with completeness of these wavelets, the integro-PDEs reduces to the system of algebraic Sylvester equation. Hence, we can achieve solution of (\ref{1})-(\ref{3}) in terms of Legendre wavelets on solving the Sylvester equation by generalised biconjugate gradient stabilized method (BICGSTAB) (i.e. robust Krylov subspace iterative method \cite{singh2018application}).
 
The outline of  this article is as follows. In section 2, we explain an introduction to the Multidimensional Legendre wavelet, function approximation and convergence of approximations. In section 3, we constructed operational matrices based on Legendre wavelet for multidimensional functional approximation with Kronecker multidimensional. In section 4, we proposed numerical method for solution. We employ some literature problems for showing the ability of the new technique in the current investigation in section 5. Finally a conclusion is given in section 6.

\section{Preliminaries: Construction of basis functions and their properties}
In this section, properties of Legendre wavelets and their associative operational matrices will be reviewed. Further, using the Kronecker multiplication, we will extend one dimensional operational matrix to multi dimensions. Legendre wavelets appeared in several different tendencies of engineering and sciences. One can point out many applications of Legendre wavelet in the numerical solution of partial differential equations (PDEs) like: Schr$\ddot{o}$dinger equation \cite{sadeghi_2019}, Poisson equation in \cite{heydari_2013}, etc. In this part of the article, properties of Legendre wavelets will be discussed that have essential roles in the Legendre wavelet operational matrix method (for instant see \cite{patel_2017_2}). 
 
 \subsection{Legendre wavelet \cite{patel_2017_2}}
 Wavelets constitute a family of functions constructed from dilation and translation of single function called the mother wavelet. When the dilation parameter a and the translation parameter b vary continuously, we have the following family of continuous wavelets:
 $$\Psi_{a,b}(\eta)={\lvert a\rvert}^{-\frac{1}{2}}\Psi\left( \frac{\eta-b}{a}\right)  ,a,b\in \textbf{R}, a\neq{0}. $$\\
 If the parameter a and b are restricted to the discrete values as $a=a_{0}^{-k,} , b=nb_{0}a_{0}^{-k} , a_{0}>1 , b_{0}>0 $, n and k are positive integers, from the above Eq. we have the following family of discrete wavelets :
 $$ \Psi_{k,n}(\eta)=\lvert a_{0}\rvert^{-\frac{k}{2}}\Psi{(a_{0}^{k}\eta-nb_{0})}, $$
 where, $\Psi_{k,n}(\eta)$ form a wavelet basis for $L^{2}(\mathbf{R})$.In particular, when $a_{0}=2$ and $b_{0}=1$ then $\Psi_{k,n}(\eta$ form an orthonormal basis. \\

 Legendre wavelet $\Psi_{n,m}(\eta)=\Psi(k,\hat{n},m,\eta)$ have four arguments $\hat{n}=2n-1, n=1,2,...,2^{k-1},k\in \textbf{Z}^{+},$ $m$ is the order of Legendre polynomials and $\eta$ is normalized time. They are defined on $[0,1)$ as follows (see  \cite{patel_2017_2}):
 \begin{equation}\label{4}
  \Psi_{n,m}(\eta)=\Psi(k,\hat{n},m,\eta)= \left\{ \begin{array}{ll}
         \sqrt{m+\frac{1}{2}}~2^{\frac{k}{2}}p_{m}(2^{k}\eta-\hat{n}), & \mbox{if $\frac{\hat{n}-1} {2^{k}}\leq \eta< \frac{\hat{n}+1} {2^{k}}$};\\
         0, & \mbox{if otherwise}.\end{array} \right.
 \end{equation}
 where, the coefficient $\sqrt{m+\frac{1}{2}}$ is for orthonormality.
 \subsection{Multi-dimensional Legendre wavelet}
 Two-dimensional Legendre wavelet can be expressed as product of one-dimensional Legendre wavelet as follows:
 \begin{equation}\label{5}
     \Psi_{n,m,n^{'},m^{'}}(\eta,\xi)=\left\{ \begin{array}{l}\Psi_{n,m}(\eta) \Psi_{n^{'},m^{'}}(\xi)~~,\mathrm{if}~
     \frac{\hat{n}-1} {2^{k}}\leq \eta< \frac{\hat{n}+1} {2^{k}},$$\\
     ~~~~~~~~~~~~~~~~~~~~~~~~~~~$$\frac{\hat{n'}-1} {2^{k'}}\leq \xi< \frac{\hat{n'}+1} {2^{k'}},\\  ~~0~~~~~~~~~~~~~~~~~~~~~,\vspace{0.3cm} \mathrm{ otherwise ,} \end{array}\right.
 \end{equation}
 
  Three-dimensional Legendre wavelet can be expressed as product of one-dimensional Legendre wavelet as follows:
  \begin{equation}\label{6}
      \Psi_{n,m,n^{'},m^{'}}(\eta,\xi,t)=\left\{ \begin{array}{l}\Psi_{n,m}(\eta) \Psi_{n^{'},m^{'}}(\xi)\Psi_{n^{''},m^{''}}(t)~~,\mathrm{if}~
      \frac{\hat{n}-1} {2^{k}}\leq \eta< \frac{\hat{n}+1} {2^{k}},$$\\
      ~~~~~~~~~~~~~~~~~~~~~~~~~~~$$\frac{\hat{n'}-1} {2^{k'}}\leq \xi< \frac{\hat{n'}+1} {2^{k'}},\\ 
       ~~~~~~~~~~~~~~~~~~~~~~~~~~~$$\frac{\hat{n''}-1} {2^{k''}}\leq t< \frac{\hat{n''}+1} {2^{k''}},\\
       ~~0~~~~~~~~~~~~~~~~~~~~~,\vspace{0.3cm} \mathrm{ otherwise ,} \end{array}\right.
  \end{equation}
 where,
 \begin{equation} \label{7}
 \Psi_{n,m}(\eta)=\sqrt{m+\frac{1}{2}} 2^{\frac{k} {2}}p_{m}(2^{k}\eta-\hat{n}),\\
 \end{equation}
 \begin{equation} \label{8}
 \Psi_{n^{'},m^{'}}(\xi)=\sqrt{m^{'}+\frac{1}{2}} 2^{\frac{k^{'}} {2}} p_{m^{'}}(2^{k^{'}}\xi-\hat{n'}),\\
 \end{equation}
  \begin{equation} \label{9}
  \Psi_{n^{''},m^{''}}(t)=\sqrt{m^{''}+\frac{1}{2}} 2^{\frac{k^{''}} {2}} p_{m^{''}}(2^{k^{''}}t-\hat{n''}),\\
  \end{equation}
 and $$m=0,1,2,...,M-1, M\in Z^{+}\cup \left\lbrace 0\right\rbrace,$$
 $$m^{'}=0,1,2,...,M^{'}-1,M'\in Z^{+}\cup \left\lbrace 0\right\rbrace,$$ 
 $$m^{''}=0,1,2,...,M^{''}-1,M''\in Z^{+}\cup \left\lbrace 0\right\rbrace,$$
 $$\hat{n}=2n-1, \hat{n'}=2n'-1, \hat{n''}=2n''-1,$$
 $$ n=1,2,3,...,2^{k-1}, n^{'}=1,2,3,...,2^{k^{'}-1}, n^{''}=1,2,3,...,2^{k^{''}-1}$$
 here also $Z^{+}$ is positive integer and $p_{m}$, $p_{m^{'}}$ and $p_{m^{''}}$ are Legendre polynomial of order $m, m'$ and $m^{''}$ respectively which are defined over the interval $[0,1]$ and also three-dimensional Legendre wavelet are orthonormal set over $\Omega=[0,1]\times [0,1]\times [0,1]$.\\

\subsection{\textbf{Function approximation}	}
Suppose that $f(\eta)$ is an arbitrary function in $L^{2}([0,1))$, then it can be approximated as follows:	
	\begin{equation}\label{10}
	f(\eta)=\sum_{n = 1}^{\infty}\sum_{m = 1}^{\infty} f_{nm} \Psi_{nm}(\eta) 
	\end{equation}
If the infinite series (\ref{10}) is truncated for $m=M-1$, then approximation of (\ref{10}) can be represented as in the following form
	\begin{equation}\label{11}
	f(\eta)=\sum_{n = 1}^{2^{k-1}}\sum_{m = 1}^{M-1} f_{nm} \Psi_{nm}(\eta)=F_{1}^{T}\Psi(\eta).
	\end{equation}
	where $\Psi(\eta)=[\Psi_{10}(\eta),\Psi_{11}(\eta),...,\Psi_{1(M-1)},...,\Psi_{2^{k-1}0},\Psi_{2^{k-1}1},..., \Psi_{2^{k-1}(M-1)}(\eta)]^{T}$ and $F_{7}^{T}$ is $1 \times (2^{k-1}(M-1))$ vector given by:
	\begin{equation*}
	F_{7}^{T}=\begin{bmatrix}
	\int_{0}^{1}f(\eta) \Psi_{10}(\eta) d\eta\\
	\int_{0}^{1}f(\eta) \Psi_{11}(\eta)d\eta\\
	\vdots\\
	\int_{0}^{1}f(\eta) \Psi_{1(M-1)}(\eta)d\eta\\
	\vdots\\
	\int_{0}^{1}f(\eta) \Psi_{2^{K-1}0}(\eta)d\eta\\
	\int_{0}^{1}f(\eta) \Psi_{2^{K-1}1}(\eta)d\eta\\
	\vdots\\
	\int_{0}^{1}f(\eta) \Psi_{2^{K-1}(M-1)}(\eta)d\eta\\
	\end{bmatrix}^{T}.
	\end{equation*}
Now generalised (\ref{11}) series as follows
	\begin{equation}\label{12}
	f(\eta,\xi)=\sum_{n = 1}^{2^{k-1}}\sum_{m = 1}^{M-1}\sum_{n' = 1}^{2^{k'-1}}\sum_{m' = 1}^{M'-1} f_{nmn'm'} \Psi_{nmn'm'}(\eta,\xi)=F_{8}^{T}\Psi(\eta,\xi),
	\end{equation} 
Where, $\Psi(\eta,\xi)=\Psi(\eta)\otimes \Psi(\xi)$ (For the numerical solution, we used the concept of Kronecker product $(\otimes)$ \cite{patel_2017_2})is $2^{k-1}2^{k^{'}-1}MM^{'}\times 1$, vector given as follows: \\
\begin{equation} \label{13}
\begin{split}
\Psi=[\Psi_{1010},\cdots,\Psi_{101(M'-1)},\Psi_{1020}\cdots,\Psi_{102(M'-1)},\cdots,
\Psi_{102^{k^{'}-1}0},\cdots,~~~~~~~~~~~~~~~~~~~~~~\\
\Psi_{102^{k^{'}-1}(M'-1)},\cdots,
\Psi_{1(M-1)10},\cdots,\Psi_{1(M-1)1(M'-1)},
\Psi_{1(M-1)20},\cdots,\Psi_{1(M-1)2(M'-1)},~~~~~~~~~~~~~\\
\Psi_{1(M-1)2^{k^{'}-1}0}\cdots,
\Psi_{1(M-1)2^{k^{'}-1}(M'-1)},
\Psi_{2010},\cdots,\Psi_{201(M'-1)},\Psi_{2020},
\cdots,~~~~~~~~~~~~~~~~~~~~~~~\\
\Psi_{202(M'-1)},\cdots,
\Psi_{202^{k^{'}-1}0},\cdots, \Psi_{202^{k^{'}-1}(M'-1)},\cdots,
\Psi_{2(M-1)10},\cdots,\Psi_{2(M-1)1(M'-1)},~~~~~~~~~~~~\\
\Psi_{2(M-1)20},\cdots,
\Psi_{2(M-1)2(M'-1)},\Psi_{2(M-1)2^{k^{'}-1}0}\cdots,
\Psi_{2(M-1)2^{k^{'}-1}(M'-1)},
\cdots,~~~~~~~~~~~~~~~~~~~~~~~~~\\
\Psi_{2^{k-1}010},\cdots ,\Psi_{2^{k-1}01(M'-1)},\Psi_{2^{k-1}020},\cdots,
 \Psi_{2^{k-1}(M-1)2^{k^{'}-1}(M'-1)}]^{T},~~~~~~~~~~~~~~~~~~~~~~~~~~~~~~\\
\end{split}
\end{equation}
and
	\begin{equation} \label{14}
	F_{8}^{T}=\begin{bmatrix}
	\int_{0}^{1}\int_{0}^{1}f(\eta,\xi) \Psi_{1010}(\eta) d\eta d\xi\\
	\int_{0}^{1}\int_{0}^{1} f(\eta,\xi) \Psi_{1011}(\eta)d\eta d\xi\\
	\vdots\\
	\int_{0}^{1}\int_{0}^{1}f(\eta,\xi) \Psi_{101(M'-1)}(\eta)d\eta d\xi\\
	\vdots\\
	\int_{0}^{1}\int_{0}^{1}f(\eta,\xi) \Psi_{111(M'-1)}(\eta) d\eta d\xi\\
	\int_{0}^{1}\int_{0}^{1} f(\eta,\xi) \Psi_{121(M'-1)}(\eta)d\eta d\xi\\
	\vdots\\
	\int_{0}^{1}\int_{0}^{1}f(\eta,\xi) \Psi_{2^{k-1}02^{k-1}0}(\eta)d\eta d\xi\\
	\int_{0}^{1}\int_{0}^{1}f(\eta,\xi) \Psi_{2^{k-1}02^{k-1}1}(\eta)d\eta d\xi\\
	\vdots\\
	\int_{0}^{1}\int_{0}^{1}f(\eta,\xi) \Psi_{2^{k-1}02^{k'-1}(M'-1)}(\eta)d\eta d\xi\\
	\int_{0}^{1}\int_{0}^{1}f(\eta,\xi) \Psi_{2^{k-1}12^{k'-1}(M'-1)}(\eta)d\eta d\xi\\
	\vdots\\
	\int_{0}^{1}\int_{0}^{1}f(\eta,\xi) \Psi_{2^{k-1}(M-1)2^{k'-1}(M'-1)}(\eta)d\eta d\xi\\
	\end{bmatrix}^{T}.
	\end{equation}
and
\begin{equation}\label{15}
f(\eta,\xi,t)=\sum_{n = 1}^{2^{k-1}}\sum_{m = 1}^{M-1}\sum_{n' = 1}^{2^{k'-1}}\sum_{m' = 1}^{M'-1}\sum_{n'' = 1}^{2^{k''-1}}\sum_{m'' = 1}^{M''-1} f_{nmn'm'n''m''} \Psi_{nmn'm'n''m''}(\eta)=F_{9}^{T}\Psi(\eta,\xi,t)
\end{equation} 
Where, $\Psi(\eta,\xi,t)=\Psi(\eta)\otimes \Psi(\xi) \otimes \Psi(t)$ are $2^{k-1}2^{k'-1}2^{k''-1}MM'M''\times 1$, vector given as follows: \\

	\begin{equation} \label{16}
	F_{9}^{T}=\begin{bmatrix}
	\int_{0}^{1}\int_{0}^{1}\int_{0}^{1}f(\eta,\xi,t) \Psi_{101010}(\eta,\xi,t) d\eta d\xi dt\\
	\int_{0}^{1}\int_{0}^{1}\int_{0}^{1} f(\eta,\xi,t) \Psi_{101011}(\eta,\xi,t)d\eta d\xi dt\\
	\vdots\\
	\int_{0}^{1}\int_{0}^{1}\int_{0}^{1}f(\eta,\xi,t) \Psi_{10101(M''-1)}(\eta,\xi,t)d\eta d\xi dt\\
		\vdots\\
	\int_{0}^{1}\int_{0}^{1}\int_{0}^{1}f(\eta,\xi,t) \Psi_{2(M-1)2(M'-1)2(M''-1)}(\eta,\xi,t)d\eta d\xi dt\\
			\vdots\\
	\int_{0}^{1}\int_{0}^{1}\int_{0}^{1}f(\eta,\xi,t) \Psi_{2^{k-1}02^{k'-1}02^{k''-1}0}(\eta,\xi,t)d\eta d\xi dt\\
	\int_{0}^{1}\int_{0}^{1}\int_{0}^{1}f(\eta,\xi,t) \Psi_{2^{k-1}02^{k'-1}02^{k''-1}1}(\eta,\xi,t)d\eta d\xi dt\\
	\vdots\\
	\int_{0}^{1}\int_{0}^{1}\int_{0}^{1}f(\eta,\xi,t) \Psi_{2^{k-1}02^{k'-1}02^{k''-1}(M''-1)}(\eta,\xi,t)d\eta d\xi dt\\
	\int_{0}^{1}\int_{0}^{1}\int_{0}^{1}f(\eta,\xi,t) \Psi_{2^{k-1}02^{k'-1}12^{k''-1}(M''-1)}(\eta,\xi,t)d\eta d\xi dt\\
		\vdots\\
	\int_{0}^{1}\int_{0}^{1}\int_{0}^{1}f(\eta,\xi,t) \Psi_{2^{k-1}02^{k'-1}(M'-1)2^{k''-1}(M''-1)}(\eta,\xi,t)d\eta d\xi dt\\
	\int_{0}^{1}\int_{0}^{1}\int_{0}^{1}f(\eta,\xi,t) \Psi_{2^{k-1}12^{k'-1}(M'-1)2^{k''-1}(M''-1)}(\eta,\xi,t)d\eta d\xi dt\\
			\vdots\\
	\int_{0}^{1}\int_{0}^{1}\int_{0}^{1}f(\eta,\xi,t) \Psi_{2^{k-1}(M-1)2^{k'-1}(M'-1)2^{k''-1}(M''-1)}(\eta,\xi,t)d\eta d\xi dt
	\end{bmatrix}^{T}.
	\end{equation}
\begin{theorem} The series 
$\sum_{\substack{n=1}}^{\substack{\infty}}\sum_{m=0}^{\infty}\sum_{n'=1}^{\infty}\sum_{m'=0}^{\infty}\sum_{n''=1}^{\infty}\sum_{m''=0}^{\infty}f_{nmn'm'n''m''}\Psi_{nmn'm'n''m''}(\eta,\xi,t)$, where $\Psi(\eta,\xi,t)$ is three-dimensional Legendre wavelets which is defined in (\ref{6}), is uniformly converges to a continuous function $f(\eta,\xi,t)$.
\end{theorem}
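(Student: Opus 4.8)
The plan is to prove uniform convergence with the Weierstrass $M$-test, so the heart of the argument is a decay estimate for the wavelet coefficients $f_{nmn'm'n''m''}=\int_0^1\!\int_0^1\!\int_0^1 f(\eta,\xi,t)\,\Psi_{nmn'm'n''m''}(\eta,\xi,t)\,d\eta\,d\xi\,dt$. I work under the standing assumption that $f$ together with its partial derivatives up to second order in each of $\eta,\xi,t$ (in particular the mixed derivative $\partial^{6}f/\partial\eta^{2}\partial\xi^{2}\partial t^{2}$) is continuous on the cube $\Omega$, hence bounded there; this is the regularity the estimate actually uses, mere continuity of $f$ being too weak to force uniform convergence of an orthonormal expansion. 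The target estimate is
$$|f_{nmn'm'n''m''}|\;\le\;\frac{C}{(n\,n'\,n'')^{5/2}\,(m\,m'\,m'')^{2}}\qquad(m,m',m''\ge 2),$$
the finitely many terms of small polynomial degree being bounded trivially.

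First I would reduce to a one-dimensional lemma. Since $\Psi_{nmn'm'n''m''}(\eta,\xi,t)=\Psi_{nm}(\eta)\Psi_{n'm'}(\xi)\Psi_{n''m''}(t)$ by \eqref{6}, I integrate in $\eta$ first, then $\xi$, then $t$, applying the same one-variable argument at each stage. On the support $I_{n}=[\frac{\hat n-1}{2^{k}},\frac{\hat n+1}{2^{k}})$ of $\Psi_{nm}$ we have $\Psi_{nm}(\eta)=\sqrt{m+\tfrac12}\,2^{k/2}p_{m}(2^{k}\eta-\hat n)$; after the substitution $x=2^{k}\eta-\hat n$ this becomes, up to the factor $\sqrt{m+\tfrac12}\,2^{-k/2}$, an integral of a rescaled function against the Legendre polynomial $p_{m}$. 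Using the classical identities $(2m+1)p_{m}=p_{m+1}'-p_{m-1}'$ and $\int p_{\ell}=\frac{1}{2\ell+1}(p_{\ell+1}-p_{\ell-1})$, I integrate by parts twice; because $p_{\ell}(1)=1$ and $p_{\ell}(-1)=(-1)^{\ell}$, the boundary contributions cancel in the telescoping combinations $p_{m+1}-p_{m-1}$ and its antiderivative, so no endpoint terms survive. This transfers two derivatives onto $f$ (producing a factor $2^{-2k}\|\partial^{2}_{\eta}f\|_{\infty}$) and leaves the $L^{1}$ norm of a combination of Legendre polynomials that is $O(m^{-3/2})$ by the standard asymptotic $\|p_{\ell}\|_{L^{1}[-1,1]}=O(\ell^{-1/2})$; collecting the normalization factor $\sqrt{m+\tfrac12}\,2^{-k/2}$ gives $|\langle f,\Psi_{nm}\rangle_{\eta}|\lesssim 2^{-5k/2}m^{-2}$, which is the claimed $n^{-5/2}m^{-2}$ decay since $2^{-k}\asymp n^{-1}$ within a dyadic level. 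Iterating in $\xi$ and $t$, now with $\partial^{2}_{\xi}$ and $\partial^{2}_{t}$ applied to the already processed integrand, i.e. ultimately the mixed sixth derivative of $f$, yields the displayed product bound.

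Next I assemble the majorant. From \eqref{7}--\eqref{9} and $|p_{m}|\le 1$ on $[-1,1]$, $\|\Psi_{nm}\|_{\infty}\le\sqrt{m+\tfrac12}\,2^{k/2}\lesssim\sqrt{m}\,\sqrt{n}$, so
$$|f_{nmn'm'n''m''}|\,\|\Psi_{nmn'm'n''m''}\|_{\infty}\;\lesssim\;\frac{1}{(n\,n'\,n'')^{2}\,(m\,m'\,m'')^{3/2}},$$
and the sixfold sum of the right-hand side converges because $\sum_{n}n^{-2}<\infty$ and $\sum_{m}m^{-3/2}<\infty$. By the Weierstrass $M$-test the series converges absolutely and uniformly on $\Omega$, hence its sum $S$ is continuous. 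Finally, the three-dimensional Legendre wavelets form an orthonormal system in $L^{2}(\Omega)$ and the $f_{nmn'm'n''m''}$ are exactly the Fourier coefficients of $f$, so the partial sums converge to $f$ in $L^{2}(\Omega)$; the uniform limit $S$ also converges in $L^{2}$, whence $S=f$ a.e., and since both are continuous, $S=f$ everywhere. The step I expect to be the main obstacle is the coefficient estimate: performing the two integrations by parts so that the endpoint contributions genuinely cancel via the Legendre derivative/antiderivative identities, and tracking the precise powers of $2^{k}$ (equivalently $n$) and of $m$ across all three variables so that the resulting sixfold majorant is summable. A secondary point worth flagging is that the hypothesis should read ``$f$ sufficiently smooth'' rather than merely ``continuous,'' since the $M$-test bound above requires that regularity.
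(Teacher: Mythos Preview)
Your approach is genuinely different from the paper's and, for the claim of \emph{uniform} convergence, actually delivers more. The paper's argument is a pure Hilbert-space one: it fixes $k,k',k''$, uses orthonormality of $\{\Psi_{mm'm''}\}$ in $L^{2}(\Omega)$ to compute $\lVert S_{N}-S_{N'}\rVert^{2}=\sum_{N'+1}^{N}\lvert f_{mm'm''}\rvert^{2}$, invokes Bessel's inequality to conclude that the partial sums are Cauchy in $L^{2}$, and then shows the $L^{2}$ limit $s$ satisfies $\langle s-f,\Psi_{mm'm''}\rangle=0$ for every basis element, whence $s=f$. That establishes $L^{2}$ convergence only; the paper labels the conclusion ``uniform'' without supplying any sup-norm estimate. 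Your Weierstrass $M$-test route, by contrast, yields genuine uniform convergence, at the price of the extra smoothness you correctly flag as necessary --- mere continuity of $f$ does not force uniform convergence of an orthonormal expansion, so the theorem as stated is too optimistic and your strengthening of the hypothesis is the right fix.

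One small correction to your write-up: in this Legendre-wavelet framework the dilation level $k$ is \emph{fixed} and $n$ ranges only over $1,\ldots,2^{k-1}$ (likewise for $k',n'$ and $k'',n''$), so the outer $n$-sums are finite. Your asserted $n^{-5/2}$ decay and the identification $2^{-k}\asymp n^{-1}$ conflate the translation index with the scale; neither is needed, since summability in $n$ is trivial. The entire burden falls on the $m$-decay, and there your bookkeeping is sound: two integrations by parts against $p_{m}$ with the identity $(2m+1)p_{m}=p_{m+1}'-p_{m-1}'$ (boundary terms cancelling because $p_{m+1}(\pm1)-p_{m-1}(\pm1)=0$), together with $\lVert p_{\ell}\rVert_{L^{1}[-1,1]}=O(\ell^{-1/2})$ and the normalization factor $\sqrt{m+\tfrac12}$, give $\lvert f_{nmn'm'n''m''}\rvert\lesssim (mm'm'')^{-2}$, and combining with $\lVert\Psi_{nm}\rVert_{\infty}\lesssim\sqrt{m}$ produces the summable majorant $(mm'm'')^{-3/2}$.
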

\begin{proof}
Let $L^{2}(\lambda_{1})$ be the Hilbert space and $\Psi(\eta,\xi,t)$ is defined in (\ref{6}) forms an orthonormal basis. So for fixed $k,k'$ and $k''$\\
\begin{equation} \label{17}
f(\eta,\xi,t)=\sum_{m=0}^{M-1}\sum_{m'=0}^{M'-1}\sum_{m''=0}^{M''-1}f_{mm'm''} \Psi_{mm'm''}(\eta,\xi,t),
\end{equation} 
where, 
\begin{equation}\label{18}
\begin{split}
f_{mm'm''}=\int_{0}^{1}\int_{0}^{1}\int_{0}^{1}f(\eta,\xi,t) \Psi_{mm'm''}(\eta,\xi,t)d\eta d\xi dt,\\
          =\left\langle f(\eta,\xi,t), \Psi_{mm'm''}(\eta,\xi,t)\right\rangle.
\end{split}
\end{equation}
Now truncate series (\ref{18}) upto $N$ level as follows 
\begin{equation}\label{19}
f(\eta,\xi,t)=\sum_{m=0}^{N}\sum_{m'=0}^{N}\sum_{m''=0}^{N}f_{mm'm''} \Psi_{mm'm''}(\eta,\xi,t)=S_{N} (\mathrm{say}).
\end{equation} 
Now, 

\begin{equation}\label{20}
\begin{split}
\left\langle f(\eta,\xi,t), S_{N} \right\rangle=\left\langle f(\eta,\xi,t), \sum_{m=0}^{N}\sum_{m'=0}^{N}\sum_{m''=0}^{N}f_{mm'm''} \Psi_{mm'm''}(\eta,\xi,t)\right\rangle  \\
                                               =\sum_{m=0}^{N}\sum_{m'=0}^{N}\sum_{m''=0}^{N}\bar{f}_{mm'm''} \left\langle f(\eta,\xi,t), \Psi_{mm'm''}(\eta,\xi,t)\right\rangle. 
\end{split}
\end{equation}
From (\ref{19}) and (\ref{20}) 
\begin{equation}\label{21}
\begin{split}
\left\langle f(\eta,\xi,t), S_{N} \right\rangle=\sum_{m=0}^{N}\sum_{m'=0}^{N}\sum_{m''=0}^{N}\bar{f}_{mm'm''} f_{mm'm''},\\
                                               =\sum_{m=0}^{N}\sum_{m'=0}^{N}\sum_{m''=0}^{N} \lvert f_{mm'm''}\rvert^{2}.
\end{split}
\end{equation}
Now, we claim that
\begin{equation}\label{22}
\begin{split}
\left\langle f(\eta,\xi,t), S_{N} \right\rangle=\sum_{m=0}^{N}\sum_{m'=0}^{N}\sum_{m''=0}^{N} \lvert f_{mm'm''}\rvert^{2} \mathrm{for} N\geq N'.
\end{split}
\end{equation}
Since
\begin{equation}\label{23}
\begin{split}
\lVert S_{N}-S_{N'} \rVert^{2}=\lVert \sum_{m=0}^{N}\sum_{m'=0}^{N}\sum_{m''=0}^{N}f_{mm'm''} \Psi_{mm'm''}(\eta,\xi,t)-\sum_{i=0}^{N}\sum_{j=0}^{N}\sum_{k=0}^{N}f_{ijk} \Psi_{ijk}(\eta,\xi,t) \rVert^{2},\\
                              =\lVert \sum_{m=N'+1}^{N}\sum_{m'=N'+1}^{N}\sum_{m''=N'+1}^{N}f_{mm'm''} \Psi_{mm'm''}(\eta,\xi,t)\rVert^{2},\\
                              =\left\langle \sum_{m=N'+1}^{N}\sum_{m'=N'+1}^{N}\sum_{m''=N'+1}^{N}f_{mm'm''} \Psi_{mm'm''}(\eta,\xi,t),\sum_{i=N'+1}^{N}\sum_{j=N'+1}^{N}\sum_{k=N'+1}^{N}f_{ijk} \Psi_{ijk}(\eta,\xi,t) \right\rangle, \\
                              =\sum_{m=N'+1}^{N}\sum_{m'=N'+1}^{N}\sum_{m''=N'+1}^{N}\sum_{i=N'+1}^{N}\sum_{j=N'+1}^{N}\sum_{k=N'+1}^{N}f_{mm'm''}\bar{f}_{ijk} \left\langle \Psi_{mm'm''}(\eta,\xi,t),\Psi_{ijk} \right\rangle,\\
                              =\sum_{m=N'+1}^{N}\sum_{m'=N'+1}^{N}\sum_{m''=N'+1}^{N}\lvert f_{mm'm''} \rvert^{2}.
\end{split}
\end{equation}
Now using Bessel's inequality series (\ref{23}) is convergent and hence
\begin{equation}
\lVert S_{N}-S_{N'} \rVert \rightarrow~~ 0~~ \mathrm{as}~~~ N,N' \rightarrow \infty. 
\end{equation}\label{24}
So $\left\langle S_{N}\right\rangle$ is Cauchy sequence in $L^{2}(\lambda_{1})$,
i.e. $S_{N}\rightarrow s \mathrm{(say)}$.
Now,
\begin{equation}\label{25}
\begin{split}
\left\langle s-f(\eta,\xi,t),\Psi_{mm'm''}(\eta,\xi,t)\right\rangle =\left\langle s,\Psi_{mm'm''}(\eta,\xi,t)\right\rangle-\left\langle f(\eta,\xi,t),\Psi_{mm'm''}(\eta,\xi,t)\right\rangle,\\
                                 =\left\langle \lim_{N\rightarrow\infty}S_{N},\Psi_{mm'm''}(\eta,\xi,t)\right\rangle-\left\langle f(\eta,\xi,t),\Psi_{mm'm''}(\eta,\xi,t)\right\rangle,\\
                                 =\lim_{N\rightarrow\infty}\left\langle S_{N},\Psi_{mm'm''}(\eta,\xi,t)\right\rangle-f_{mm'm''},\\
                                 =\lim_{N\rightarrow\infty}\left\langle  \sum_{m=N'+1}^{N}\sum_{m'=N'+1}^{N}\sum_{m''=N'+1}^{N}f_{mm'm''} \Psi_{mm'm''}(\eta,\xi,t),\Psi_{mm'm''}(\eta,\xi,t),\Psi_{ijk}(\eta,\xi,t)\right\rangle-f_{mm'm''},\\
                                 =\lim_{N\rightarrow\infty}f_{mm'm''} \left\langle \Psi_{mm'm''}(\eta,\xi,t),\Psi_{ijk}(\eta,\xi,t)\right\rangle-f_{mm'm''},\\
                                 =f_{mm'm''}-f_{mm'm''}~~~~~~\mathrm{(using ~orthonormality ~of ~Legendre~ wavelets)},\\
                                 =0.\hspace{11cm}~~~~~
\end{split}
\end{equation}
Hence $s=f(\eta,\xi,t)$.\\
Thus the series  $\sum_{\substack{n=1}}^{\substack{\infty}}\sum_{m=0}^{\infty}\sum_{n'=1}^{\infty}\sum_{m'=0}^{\infty}\sum_{n''=1}^{\infty}\sum_{m''=0}^{\infty}f_{nmn'm'n''m''}\Psi_{nmn'm'n''m''}(\eta,\xi,t)$ converge uniformly to $f(\eta,\xi,t)$.\\
Hence the theorem.

\end{proof}
\section{Operational Matrices}
Let $\Psi(t)=[\Psi_{0}(t),\Psi_{1}(t),...,\Psi_{N}(t)]^{T}$ be the basis functions. Then,
\begin{equation}
\frac{d}{dt}\begin{bmatrix}
\Psi_{0}(t)\\
\Psi_{1}(t)\\
\vdots\\
\Psi_{N}(t)
\end{bmatrix}
\approx
\begin{bmatrix}
F & 0 & 0 & \cdots & 0\\
0 & F & 0 & \cdots & 0\\
\vdots & \vdots & \ddots & \ddots & \vdots\\
0 & 0 & 0 & \cdots   & F
\end{bmatrix}_{2^{k-1}M \times 2^{k-1}M}
\begin{bmatrix}
\Psi_{0}(t)\\
\Psi_{1}(t)\\
\vdots\\
\Psi_{N}(t)
\end{bmatrix}=D_{t} \Psi^{T}(t),
\end{equation}
 in which $F$ is $M\times M$ matrix with entries
$$
 F_{r,s}= \left\{ \begin{array}{ll}
         2^{k}\sqrt{(2r-1)(2s-1)}, & \mbox{if $r=2,3,...M, s=1,2,...,r-1, (r+s) \mathrm{odd}$};\\
         0, & \mbox{if otherwise}.\end{array} \right.
$$
And 
$$
\int_{0}^{t}\begin{bmatrix}
\Psi_{0}(t')\\
\Psi_{1}(t')\\
\vdots\\
\Psi_{N}(t')
\end{bmatrix}dt'
\approx 
$$
$$
\begin{bmatrix}
1 & \sqrt{3} & 0 & 0 & \cdots & 0 & 0 & 0\\
-\frac{\sqrt{3}}{3}& 0 &\frac{\sqrt{3}}{3\sqrt{5}}& 0 & \cdots & 0 & 0 & 0\\
0 & -\frac{\sqrt{5}}{5\sqrt{3}}& 0 &\frac{\sqrt{5}}{5\sqrt{7}}& 0 & \cdots & 0 & 0 & 0\\
\vdots & \vdots & \vdots & \vdots & \ddots & \vdots & \vdots & \vdots\\
0 & 0 & 0 & 0 & \cdots & \frac{\sqrt{2M-3}}{(2M-3)\sqrt{2M-5}} & 0 & \frac{\sqrt{2M-3}}{(2M-3)\sqrt{2M-1}}\\
0 & 0 & 0 & 0 & \cdots & 0 & \frac{\sqrt{2M-1}}{(2M-1)\sqrt{2M-3}} & 0   
\end{bmatrix}
\begin{bmatrix}
\Psi_{0}(t)\\
\Psi_{1}(t)\\
\vdots\\
\Psi_{N}(t)
\end{bmatrix}=I_{t} \Psi^{T}(t)
$$
Here, $I_{t}$ is ${ 2^{k-1}M \times 2^{k-1}M}$ matrix.

The approximation given in section (2) for numerical method can be extended for the higher dimension. Two variables functions, namely $f_{3}(\xi,t),f_{4}(\xi,t),f_{5}(\eta,t)$ and $f_{6}(\eta,t)$ in $L^2([0,1]\times[0,1])$ can be approximated as:
\begin{equation}\label{26}
\left\{ \begin{array}{l}
f_{3}(\xi,t)\approx\sum_{n'=1}^{2^{k'-1}}\sum_{m'=0}^{M'-1}\sum_{n''=1}^{2^{k''-1}}\sum_{m''=0}^{M''-1}f_{n'm'n''m''}^{3}\Psi_{m}(\xi)\Psi_{n}(t)=\Psi^{T}(\xi)F_{3}\Psi(t),
\vspace{0.2cm}\\
f_{4}(\xi,t)\approx\sum_{n'=1}^{2^{k'-1}}\sum_{m'=0}^{M'-1}\sum_{n''=1}^{2^{k''-1}}\sum_{m''=0}^{M''-1}f_{n'm'n''m''}^{4}\Psi_{m}(\xi)\Psi_{n}(t)=\Psi^{T}(\xi)F_{4}\Psi(t),
\vspace{0.2cm}\\
f_{5}(\eta,t)\approx\sum_{n=1}^{2^{k-1}}\sum_{m=0}^{M-1}\sum_{n''=1}^{2^{k''-1}}\sum_{m''=0}^{M''-1}f_{nmn''m''}^{5}\Psi_{m}(\eta)\Psi_{n}(t)=\Psi^{T}(\eta)F_{5}\Psi(t),
\vspace{0.2cm}\\
f_{6}(\eta,t)\approx\sum_{n=1}^{2^{k-1}}\sum_{m=0}^{M-1}\sum_{n''=1}^{2^{k''-1}}\sum_{m''=0}^{M''-1}f_{nmn''m''}^{6}\Psi_{m}(\eta)\Psi_{n}(t)=\Psi^{T}(\eta)F_{6}\Psi(t),
\end{array}\right.
\end{equation}
where    

$$F_{3}=
\begin{bmatrix}
f_{1010}^{3} & f_{1011}^{3} & \cdots & f_{101(M''-1)}^{3} & f_{111(M''-1)}^{3} & \cdots f_{1(M'-1)1(M''-1)}^{3}\\
f_{2020}^{3} & f_{2021}^{3} & \cdots & f_{202(M''-1)}^{3} & f_{212(M''-1)}^{3} & \cdots f_{2(M'-1)2(M''-1)}^{3}\\
\vdots & \vdots & \ddots & \vdots & \vdots & \vdots\\
f_{2^{k'-1}02^{k''-1}0}^{3} & f_{2^{k'-1}02^{k''-1}1}^{3} & \cdots & f_{2^{k'-1}02^{k''-1}(M''-1)}^{3} & f_{2^{k'-1}12^{k''-1}(M''-1)}^{3} & \cdots f_{2^{k'-1}(M'-1)2^{k'-1}(M''-1)}^{3}\\
\end{bmatrix},
$$
$$
F_{4}=
\begin{bmatrix}
f_{1010}^{4} & f_{1011}^{4} & \cdots & f_{101(M''-1)}^{4} & f_{111(M''-1)}^{4} & \cdots f_{1(M'-1)1(M''-1)}^{4}\\
f_{2020}^{4} & f_{2021}^{4} & \cdots & f_{202(M''-1)}^{4} & f_{212(M''-1)}^{4} & \cdots f_{2(M'-1)2(M''-1)}^{4}\\
\vdots & \vdots & \ddots & \vdots & \vdots & \vdots\\
f_{2^{k'-1}02^{k''-1}0}^{4} & f_{2^{k'-1}02^{k''-1}1}^{4} & \cdots & f_{2^{k'-1}02^{k''-1}(M''-1)}^{4} & f_{2^{k'-1}12^{k''-1}(M''-1)}^{4} & \cdots f_{2^{k'-1}(M'-1)2^{k'-1}(M''-1)}^{4}
\end{bmatrix}
$$
$$
F_{5}=
\begin{bmatrix}
f_{1010}^{5} & f_{1011}^{5} & \cdots & f_{101(M''-1)}^{5} & f_{111(M''-1)}^{5} & \cdots f_{1(M'-1)1(M''-1)}^{5}\\
f_{2020}^{5} & f_{2021}^{5} & \cdots & f_{202(M''-1)}^{5} & f_{212(M''-1)}^{5} & \cdots f_{2(M'-1)2(M''-1)}^{5}\\
\vdots & \vdots & \ddots & \vdots & \vdots & \vdots\\
f_{2^{k'-1}02^{k''-1}0}^{5} & f_{2^{k'-1}02^{k''-1}1}^{5} & \cdots & f_{2^{k'-1}02^{k''-1}(M''-1)}^{5} & f_{2^{k'-1}12^{k''-1}(M''-1)}^{5} & \cdots f_{2^{k'-1}(M'-1)2^{k'-1}(M''-1)}^{5}
\end{bmatrix},
$$
$$
F_{4}=
\begin{bmatrix}
f_{1010}^{6} & f_{1011}^{6} & \cdots & f_{101(M''-1)}^{6} & f_{111(M''-1)}^{6} & \cdots f_{1(M'-1)1(M''-1)}^{6}\\
f_{2020}^{6} & f_{2021}^{6} & \cdots & f_{202(M''-1)}^{6} & f_{212(M''-1)}^{6} & \cdots f_{2(M'-1)2(M''-1)}^{6}\\
\vdots & \vdots & \ddots & \vdots & \vdots & \vdots\\
f_{2^{k'-1}02^{k''-1}0}^{6} & f_{2^{k'-1}02^{k''-1}1}^{6} & \cdots & f_{2^{k'-1}02^{k''-1}(M''-1)}^{6} & f_{2^{k'-1}12^{k''-1}(M''-1)}^{6} & \cdots f_{2^{k'-1}(M'-1)2^{k'-1}(M''-1)}^{6}
\end{bmatrix}.
$$
where,
$$
f_{n'm'n''m''}^{3}=\int_{0}^{1}\int_{0}^{1}f_{3}(\xi,t) \Psi(\xi,t) d\xi dt,
$$
$$
f_{n'm'n''m''}^{4}=\int_{0}^{1}\int_{0}^{1}f_{4}(\xi,t) \Psi(\xi,t) d\xi dt,
$$
$$
f_{n'm'n''m''}^{5}=\int_{0}^{1}\int_{0}^{1}f_{5}(\eta,t) \Psi(\eta,t) d\eta dt,
$$
$$
f_{n'm'n''m''}^{6}=\int_{0}^{1}\int_{0}^{1}f_{6}(\eta,t) \Psi(\eta,t)d\eta dt.
$$
\begin{Lemma} Let
$\lambda_{1}=\int_{0}^{1}\Psi^{T}(\eta)d\eta$  and $\lambda_{1}{I_{L}}=\lambda_{1} I_{L}^{T}$ are the $1 \times 2^{k-1}M  $ vectors, then the following relations hold
\begin{itemize}
\item[(i)]
$\Psi^{T}(\xi)=\Psi^{T}(\eta,\xi)P$,
\item[(ii)]
$\eta \Psi^{T}(\xi)=\Psi^{T}(\eta,\xi)\overline{P},\hspace{-0.3cm}$
\item[(iii)]
$\eta(\lambda_{1}{I_{\eta}}\otimes I \Psi^{T}(\xi))=\Psi^{T}(\eta,\xi)Q,\hspace{-1.5cm}$
\item[(iv)]
$\xi( \Psi^{T}(\eta)I\otimes \lambda_{1}{I_{\eta}})=\Psi^{T}(\eta,\xi)\overline{Q},\hspace{-1.5cm}$
\item[(v)]
$\Psi^{T}(\eta)=\Psi^{T}(\eta,\xi)R,$
\item[(vi)]
$\xi \Psi^{T}(\eta)=\Psi^{T}(\eta,\xi)\overline{R}.\hspace{-0.2cm}$
\end{itemize}
\end{Lemma}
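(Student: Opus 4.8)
The plan is to derive all six identities from one structural fact recorded in Section~2, namely $\Psi(\eta,\xi)=\Psi(\eta)\otimes\Psi(\xi)$ (equation~(\ref{12})), hence $\Psi^{T}(\eta,\xi)=\Psi^{T}(\eta)\otimes\Psi^{T}(\xi)$, together with the mixed-product rule $(A\otimes B)(C\otimes D)=(AC)\otimes(BD)$ for Kronecker products. The one extra ingredient needed is that certain elementary one-variable functions are represented \emph{exactly} by finitely many Legendre wavelets: since $p_{0}$ is constant on each dyadic subinterval, the constant function $1$ equals $c^{T}\Psi(\eta)$ for an explicit constant column vector $c$ (the coefficient vector of~(\ref{11}) evaluated at $f\equiv1$); and since $p_{1}$ is linear and $M\ge2$, the function $\eta$ equals $d^{T}\Psi(\eta)$ for an explicit $d$. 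The analogous vectors in the variable $\xi$ will be written $c'$ and $d'$.

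For part~(i) I would compute $\Psi^{T}(\xi)=1\cdot\Psi^{T}(\xi)=\bigl(c^{T}\Psi(\eta)\bigr)\Psi^{T}(\xi)=\bigl(\Psi^{T}(\eta)\otimes\Psi^{T}(\xi)\bigr)\bigl(c\otimes I\bigr)=\Psi^{T}(\eta,\xi)P$, reading off $P:=c\otimes I$ with $I$ the identity of order $2^{k'-1}M'$; the scalar $c^{T}\Psi(\eta)$ may be moved through the Kronecker product freely. Part~(ii) is the same calculation with $c$ replaced by $d$, using $\eta\,\Psi^{T}(\xi)=\bigl(d^{T}\Psi(\eta)\bigr)\Psi^{T}(\xi)$ and giving $\overline{P}=d\otimes I$. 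Parts~(v) and~(vi) are the mirror images: to isolate $\Psi^{T}(\eta)$ I insert the constant (resp.\ linear) expansion in the $\xi$-slot, so that $\bigl(\Psi^{T}(\eta)\otimes\Psi^{T}(\xi)\bigr)\bigl(I\otimes c'\bigr)=\Psi^{T}(\eta)\otimes1=\Psi^{T}(\eta)$, yielding $R=I\otimes c'$ and $\overline{R}=I\otimes d'$ with $I$ of order $2^{k-1}M$.

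Parts~(iii) and~(iv) follow the same pattern but additionally carry the one-dimensional integration operational matrix $I_{\eta}$ (the matrix $I_{t}$ of Section~3 rewritten in the $\eta$-variable) and the row vector $\lambda_{1}=\int_{0}^{1}\Psi^{T}(\eta)\,d\eta$. For~(iii) I would expand $\eta\bigl(\lambda_{1}I_{\eta}\otimes\Psi^{T}(\xi)\bigr)$ using $\eta=d^{T}\Psi(\eta)$ and the mixed-product rule, collect every $\eta$-dependent factor ($d^{T}$, $I_{\eta}$, $\lambda_{1}$) into one row block while leaving $\Psi^{T}(\xi)$ untouched, and define $Q$ as the Kronecker product of that block with the identity of order $2^{k'-1}M'$; part~(iv) is obtained by the symmetric computation with the roles of $\eta$ and $\xi$ exchanged, so that the collected factors sit in the second Kronecker slot and produce $\overline{Q}$.

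The individual manipulations are routine once the set-up is in place. The step that needs genuine care — and essentially the only place an error could enter — is the Kronecker bookkeeping: tracking the block sizes and the left/right placement in each product so that the stated vector dimensions ($1\times2^{k-1}M$, etc.) come out correctly, and checking that the low-degree functions used in~(i)--(vi) actually lie in the truncated wavelet span so that these representations are exact rather than merely approximate. Accordingly I would handle each of the six cases by first writing the one-variable scalar/vector identity, then tensoring with the identity operator in the other variable, and finally identifying $P,\overline{P},Q,\overline{Q},R,\overline{R}$ as explicit Kronecker products.
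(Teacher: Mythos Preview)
Your approach is essentially the same as the paper's: expand the one-variable factor (the constant $1$, the linear function $\eta$, or the vector $\eta\,\lambda_{1}I_{\eta}$) in the Legendre wavelet basis, then apply the mixed-product rule $(A\otimes B)(C\otimes D)=(AC)\otimes(BD)$ together with $\Psi^{T}(\eta,\xi)=\Psi^{T}(\eta)\otimes\Psi^{T}(\xi)$ to read off each matrix as a Kronecker product with an identity block. Your treatment is slightly cleaner in two respects---you note that the expansions of $1$ and $\eta$ are \emph{exact} (since $p_{0},p_{1}$ lie in the span for $M\ge2$), whereas the paper writes ``$\approx$'', and for (iii) you factor $\eta=d^{T}\Psi(\eta)$ first rather than expanding the full vector $\eta\,\lambda_{1}I_{\eta}$ componentwise---but these are presentational differences, not a different argument.
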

\begin{proof}

Let $\Psi(\eta)=[\Psi_{10}(\eta),\Psi_{11}(\eta),...,\Psi_{1(M-1)}(\eta),...,\Psi_{2^{k-1}0}(\eta), \Psi_{2^{k-1}0}(\eta),...,\Psi_{2^{k-1}(M-1)}(\eta)]^T$ be the one dimensional Legendre wavelet basis of $L^2[0,1]$. Then $\eta \Psi^{T}(\xi)$ can be written in the following form \\
\begin{equation} \label{27}
\eta \Psi^{T}(\xi)=\eta \otimes \Psi^{T}(\xi).
\end{equation} \label{28}
Let $\eta=g(\eta) \in L^{2}[0,1]$. Then $g(\eta)$ can be approximated in terms of Legendre wavelet basis functions as 
$$g(\eta)=\eta \approx \sum_{n=1}^{2^{k-1}}\sum_{m=0}^{M-1} g_{nm} \Psi_{nm}(\eta)=A^{T} \Psi(\eta)$$
where, $ A = [a_{10}, a_{11},...,a_{1(M-1)},..., a_{2^{k-1}0}, a_{2^{k-1}1},...,a_{2^{k-1}(M-1)}]^T$ and the coefficients of $A$ are calculated by the formula
$$a_{i}=\int_{0}^{1} \eta \Psi_{i}(\eta) d\eta$$
hence from Eq.$(18)$
\begin{equation} \label{29}
\eta \Psi^{T}(\xi)=\Psi^{T}(\eta) A \otimes \Psi^{T}(\xi) =\left( \Psi^{T}(\eta) \otimes \Psi^{T}(\xi) \right) \left(  A \otimes I_{N+1}\right)  = \Psi^{T}(\eta,\xi)\overline{P},
\end{equation}
where, $$\overline{P}=A \otimes I.$$

Similarly, we can write $(iii)$ as $$\eta\left( \lambda_{1}{I_{\eta}}\otimes I \Psi^{T}(\xi)\right)=\left(\eta \lambda_{1}{I_{\eta}}\right)\otimes I \Psi^{T}(\xi) .$$
Since, $\eta \lambda_{1}{I_{L}}$ is $1 \times 2^{k-1}M$ vector and each element of this vector is a function of $\eta$
so we can make similar argument as given in the proof of $(ii)$.\\ 
Hence,
\begin{equation}\label{30}
\begin{split}
\left(\eta \lambda_{1}{I_L}\right)\otimes I \Psi^{T}(\xi)=\left( \Psi^{T}(\eta) B^{T}\right)  \otimes I \Psi^{T}(\xi),\hspace{3cm}\\
                                                  =\left( \Psi^{T}(\eta) \otimes \Psi^{T}(\xi) \right) \left(  B^{T} \otimes I\right),\hspace{1.6cm}\\
                                                  =\Psi^{T}(\eta,\xi)P,\hspace{4.7cm}
\end{split}
\end{equation}
where, $P=\left( B^{T}\otimes I_{N+1}\right) $
and $$B=\begin{bmatrix}
c_{10} & c_{11} & \cdots & c_{1(M-1)}\\
c_{20} & c_{21} & \cdots & c_{2(M-1)}\\
\vdots & \vdots & \ddots & \vdots\\
c_{2^{k-1}0} & c_{2^{k-1}1} & \cdots & c_{2^{k-1}(M-1)}
\end{bmatrix}=[c_{nm}]_{2^{k-1}M \times 2^{k-1}M}.$$
The coefficients  $c_{nm}$ are calculated as follows $$c_{nm}= \int_{0}^{1}\left[ \left( \eta \lambda_{1}{I_\eta}\right) \Psi_{nm}(\eta) \right] d\eta.$$
\textbf{Note:} (a)~All other matrices $\overline{Q}, R$ and $\overline{R}$  are calculated in the similar manner.\\[5pt]
~~~~~(b)~ If we take $\eta=1$ then the proof of $(i)$ is same as $(ii)$. Similarly, the proof of $(iv),(v)$ and $(vi)$ are same as $(i),(ii)$ and $(iii)$ respectively.
\end{proof}

\begin{Lemma}
If $D_{\xi}=I_{2^{k'-1}M' \times 2^{k'-1}M'}\otimes D_{2^{k'-1}M' \times 2^{k'-1}M'},
I_{\xi}=I_{2^{k'-1}M' \times 2^{k'-1}M'}\otimes I_{L,2^{k'-1}M' \times 2^{k'-1}M'}$ and $I_{\eta}=I_{L,2^{k-1}M \times 2^{k-1}M}\otimes I_{2^{k'-1}M' \times 2^{k'-1}M'},$ where
$I$ denotes the identity matrix  then we have\\
\begin{itemize}
\item[(i)]
$\frac{\partial \Psi(\eta,\xi)}{\partial
\xi}=\Psi_{\xi}(\eta,\xi)=D_{\xi}\Psi(\eta,\xi),$
\item[(ii)]
$\frac{\partial^{2} \Psi(\eta,\xi)}{\partial
\xi^{2}}=\Psi_{\xi\xi}(\eta,\xi)=(D_{\xi})^{2}\Psi(\eta,\xi),$
\item[(iii)]
$\int_{0}^{\xi}\Psi(\eta,\xi')d\xi'\approx I_{\xi}\Psi(\eta,\xi),$
\item[(iv)]
$\int_{0}^{\xi}\int_{0}^{\xi'}\Psi(\eta,\xi'')d\xi''d\xi'\approx
(I_{\xi})^{2}\Psi(\eta,\xi),$
\item[(v)]
$\int_{0}^{\eta}\Psi(\eta',\xi)d\eta'\approx I_{\eta}\Psi(\eta,\xi),$
\item[(vi)]
$\int_{0}^{\eta}\int_{0}^{\eta'}\Psi(\eta'',\xi)d\eta''d\eta'\approx
(I_{\eta})^{2}\Psi(\eta,\xi).$
\item[(vii)] $\frac{\partial \Psi(t)}{\partial
t}=\Psi_{t}(t)=D_{t}\Psi(t),$
\item[(viii)]$\frac{\partial^{2}\Psi(t)}{\partial t^{2}}=\Psi_{tt}(t)=D_{t}^{2}\Psi(t),$
\end{itemize}
\end{Lemma}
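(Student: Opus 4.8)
The plan is to establish all eight identities by reducing them to properties of the one–dimensional operational matrices $D$ and $I_L$ introduced at the start of Section 3, together with the multidimensional basis $\Psi(\eta,\xi)=\Psi(\eta)\otimes\Psi(\xi)$ and the mixed–product rule for Kronecker products, namely $(A\otimes B)(C\otimes D)=(AC)\otimes(BD)$. The guiding observation is that differentiation or integration in the variable $\xi$ only ever touches the $\Psi(\xi)$ factor of the tensor product, so each two–dimensional statement should factor as (identity on the $\eta$–block) $\otimes$ (known one–dimensional relation on the $\xi$–block), and symmetrically for $\eta$.

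First I would prove (i). Writing $\Psi(\eta,\xi)=\Psi(\eta)\otimes\Psi(\xi)$ and differentiating under the fixed $\eta$–factor gives
\begin{equation*}
\frac{\partial}{\partial\xi}\bigl(\Psi(\eta)\otimes\Psi(\xi)\bigr)=\Psi(\eta)\otimes\frac{d\Psi(\xi)}{d\xi}\approx\Psi(\eta)\otimes\bigl(D\,\Psi(\xi)\bigr)=\bigl(I\otimes D\bigr)\bigl(\Psi(\eta)\otimes\Psi(\xi)\bigr),
\end{equation*}
which is exactly $D_{\xi}\Psi(\eta,\xi)$ by the definition of $D_{\xi}$ in the hypothesis. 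Part (ii) then follows by applying (i) twice and using $(I\otimes D)^2=I\otimes D^2$, so $\Psi_{\xi\xi}=(D_\xi)^2\Psi(\eta,\xi)$. Parts (iii) and (iv) are the integral analogues: since $\int_0^\xi\Psi(\eta,\xi')\,d\xi'=\Psi(\eta)\otimes\int_0^\xi\Psi(\xi')\,d\xi'\approx\Psi(\eta)\otimes(I_L\Psi(\xi))=(I\otimes I_L)\Psi(\eta,\xi)=I_\xi\Psi(\eta,\xi)$, and iterating gives the double integral with $(I_\xi)^2$. Parts (v) and (vi) are obtained in precisely the same way, but now the operator acts on the $\Psi(\eta)$ factor, so the Kronecker factors are swapped: $\int_0^\eta\Psi(\eta',\xi)\,d\eta'=\bigl(\int_0^\eta\Psi(\eta')\,d\eta'\bigr)\otimes\Psi(\xi)\approx(I_L\otimes I)\Psi(\eta,\xi)=I_\eta\Psi(\eta,\xi)$, matching the stated definition of $I_\eta$, and squaring for (vi). Finally (vii) and (viii) are just the purely one–dimensional statements already recorded at the beginning of Section 3 (the definitions of $D_t$ and the relation $\frac{d}{dt}\Psi(t)=D_t\Psi(t)$), with (viii) following by one more application of (vii) together with $D_t^2=D_t\cdot D_t$; there is nothing new to prove beyond quoting those constructions.

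The only genuine subtlety — and the step I expect to be the main obstacle to a fully rigorous write‑up — is the passage from the exact derivative/integral of a single Legendre wavelet to its operational–matrix representation. The relation $\frac{d}{dt}\Psi(t)=D\,\Psi(t)$ is exact on each subinterval only for the derivative (the derivative of a degree‑$m$ Legendre polynomial is a polynomial of degree $m-1$, hence lies in the span), whereas $\int_0^t\Psi(t')\,dt'$ is genuinely an approximation because the antiderivative raises the degree by one and the top term must be truncated; this is why the excerpt writes "$\approx$" in (iii)–(vi) and in the boundary–integral matrix. I would therefore state clearly at the outset that we work modulo this truncation, so that "$\approx$" propagates through the tensor identities exactly as "$=$" would, and note that the error is controlled by the convergence result of the Theorem in Section 2. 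With that caveat in place, each item reduces to one line of Kronecker bookkeeping, and no further estimates are required.
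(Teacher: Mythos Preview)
Your proposal is correct and in fact supplies the details that the paper omits: the paper's own ``proof'' of this lemma is the single sentence ``In section 3, we can see the proof which are summarized here,'' i.e.\ it simply refers back to the one-dimensional operational-matrix constructions and leaves the Kronecker-product bookkeeping to the reader. Your argument via $\Psi(\eta,\xi)=\Psi(\eta)\otimes\Psi(\xi)$ and the mixed-product rule is exactly the intended (and standard) route, and your remark distinguishing the exact differentiation relation from the truncated integration relation is a genuine clarification beyond what the paper provides.
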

\begin{proof}
In section 3, we can see the proof which are summarized here.
\end{proof}
\section{Numerical Method of Solution}
To find numerical solution of Eqs.(\ref{1})-(\ref{3}), firstly, we will use all the initial and boundary conditions on (\ref{1}) then (\ref{1}) will 
convert into partial integro-differential equation (PIDE). For this purpose we rewrite Eq.(\ref{1}) as follows
\begin{equation}\label{31}
\frac{\partial^{2}\Phi(\eta,\xi,t)}{\partial
\eta^{2}}=\frac{\partial^{2} \Phi(\eta,\xi,t)}{\partial
t^{2}}+2\lambda_{1}\frac{\partial \Phi(\eta,\xi,t)}{\partial
t}+\lambda_{2}^{2}\Phi(\eta,\xi,t)-\frac{\partial^{2}\Phi(\eta,\xi,t)}{\partial
\xi^{2}}-F(\eta,\xi,t).
\end{equation}
Integrating Eq.(\ref{31}) in the interval $[0,\eta]$, we obtain
\begin{equation}\label{32}
\begin{split}
\Phi_{\eta}(\eta,\xi,t)=\Phi_{\eta}(0,\xi,t)+\int_{0}^{\eta}(\Phi_{tt}(\eta',\xi,t)+2\lambda_{1}
\Phi_{t}(\eta',\xi,t)+\lambda_{2}^{2}\Phi(\eta',\xi,t)\\
-\Phi_{\xi\xi}(\eta',\xi,t)-F(\eta',\xi,t))d\eta'.
\end{split}
\end{equation}
further integrating Eq.(\ref{32}) in the interval $[0,\eta]$, we get
\begin{equation}\label{33}
\begin{split}
\Phi(\eta,\xi,t)=\underbrace{\Phi(0,\xi,t)}_{=f_{3}(\xi,t)}+\eta \Phi_{\eta}(0,\xi,t)+\int_{0}^{\eta}\int_{0}^{\eta'}(\Phi_{tt}(\eta'',\xi,t)+2\lambda_{1}
\Phi_{t}(\eta'',\xi,t)\\+\lambda_{2}^{2}\Phi(\eta'',\xi,t)
-\Phi_{\xi\xi}(\eta'',\xi,t)-F(\eta'',\xi,t))d\eta''d\eta'.
\end{split}
\end{equation}
Put $\eta=1$ in Eq.(\ref{33}), we get
\begin{equation}\label{34}
\begin{split}
\Phi_{\eta}(0,\xi,t)=\underbrace{\Phi(1,\xi,t)}_{=f_{4}(\xi,t)}-f_{1}(\xi,t)-\int_{0}^{1}\int_{0}^{\eta}( \Phi_{tt}(\eta',\xi,t)+2\lambda_{1}\Phi_{t}(\eta',\xi,t)\\
+\lambda_{2}^{2}\Phi(\eta',\xi,t)-\Phi_{\xi\xi}(\eta',\xi,t)-F(\eta',\xi,t)) d\eta'd\eta.
\end{split}
\end{equation}
Substituting the value of $\Phi_{\eta}(0,\xi,t)$ from Eq.(\ref{34}) to (\ref{33}), we get
\begin{equation}\label{35}
\begin{split}
\Phi(\eta,\xi,t)=\eta f_{4}(\xi,t)+(1-\eta)f_{3}(\xi,t)-\eta\int_{0}^{1}\int_{0}^{\eta}( \Phi_{tt}(\eta',\xi,t)+2\lambda_{1} \Phi_{t}(\eta',\xi,t)\\
+\lambda_{2}^{2}\Phi(\eta',\xi,t)-\Phi_{\xi\xi}(\eta',\xi,t)-F(\eta',\xi,t))d\eta'd\eta
+\int_{0}^{\eta}\int_{0}^{\eta'}(\Phi_{tt}(\eta'',\xi,t)\\
+2\lambda_{1}\Phi_{t}(\eta'',\xi,t)+\lambda_{2}^{2}\Phi(\eta'',\xi,t)-\Phi_{\xi\xi}(\eta'',\xi,t)-F(\eta'',\xi,t)) d\eta''d\eta'.
\end{split}
\end{equation}
 To use boundary conditions in $\xi$, we integrate $\Phi_{\xi\xi}(\eta,\xi,t)$ from 0 to $\xi$, we obtain
 
\begin{equation}\label{36}
\int_{0}^{\xi}\Phi_{\xi\xi}(\eta,\xi',t)d\xi
=\Phi_{\xi}(\eta,\xi,t)-\Phi_{\xi}(\eta,0,t),
\end{equation} 
 further integrating Eq.(\ref{36}) from 0 to $\xi$,  we get 
$$ 
 \int_{0}^{\xi}\int_{0}^{\xi'}\Phi_{\xi\xi}(\eta,\xi'',t)d\xi''d\xi'=\Phi(\eta,\xi,t)-\Phi(\eta,0,t)-\xi \Phi_{\xi}(\eta,0,t),
$$
or
 \begin{equation} \label{37}
\begin{split}
  \Phi(\eta,\xi,t)=f_{5}(\eta,t)+\xi \Phi_{\xi}(\eta,0,t)+\int_{0}^{\xi}\int_{0}^{\xi'}\Phi_{\xi\xi}(\eta,\xi'',t)d\xi''d\xi'.
\end{split}
\end{equation} 
 To use the condition $\Phi(\eta,1,t)=f_{6}(\eta,t)$, put $\xi=1$ in Eq.(\ref{37}), we get
 
 \begin{equation} \label{38}
 \Phi_{\xi}(\eta,0,t)=f_{6}(\eta,t)- f_{5}(\eta,t) -\int_{0}^{1}\int_{0}^{\xi}\Phi_{\xi\xi}(\eta,\xi',t)d\xi'd\xi,
  \end{equation} 
 substitute the value of $\Phi_{\xi}(\eta,0,t)$ obtained by Eq.(\ref{38}) into (\ref{37}), we obtain
 
\begin{equation}\label{39}
\begin{split}
\Phi(\eta,\xi,t)=\xi
f_{6}(\eta,t)+(1-\xi)f_{5}(\eta,t)-\xi\int_{0}^{1}\int_{0}^{\xi}\Phi_{\xi\xi}(\eta,\xi',t)d\xi'd\xi\\
+\int_{0}^{\xi}\int_{0}^{\xi'}\Phi_{\xi\xi}(\eta,\xi'',t)d\xi''d\xi'.
\end{split}
\end{equation}
Now, our goal is to use the initial conditions (\ref{2}). For this purpose, we use same process which is used to use the boundary conditions in $\xi$ i.e integrate $\Phi_{tt}(\eta,\xi,t)$ two times from $0$ to $t$ and obtain
\begin{equation*}
\Phi(\eta,\xi,t)=f_{1}(\eta,\xi)+t
f_{2}(\eta,\xi)+\int_{0}^{t}\int_{0}^{t'}\Phi_{tt}(\eta,\xi,t'')dt''dt'.
\end{equation*}
The above Eq. can be rewritten as
\begin{equation}\label{40}
\Phi(\eta,\xi,t)=f(\eta,\xi,t)+\int_{0}^{t}\int_{0}^{t'}\Phi_{tt}(\eta,\xi,t'')dt''dt'
\end{equation}
where, $f(\eta,\xi,t)=f_{1}(\eta,\xi)+t f_{2}(\eta,\xi)$. 

Since, we can obtain the original Eq. (\ref{1}) with boundary conditions $\Phi(0,\xi,t)=f_3(\xi,t)$ and $\Phi(1,\xi,t)=f_4(\xi,t)$ by differentiating two times Eqs. (\ref{35}) and replacing $\eta$ by $0$ and $1$ in Eq. (\ref{35}) respectively. Including this boundary conditions $\Phi(\eta,0,t)=h_1(\eta,t)$, $\Phi(\eta,1,t)=h_2(\eta,t)$ and initial conditions $\Phi(\eta,\xi,0)=f_1(\eta,\xi)$, $\Phi_{t}(\eta,\xi,0)=f_2(\eta,\xi)$ received by replacing $\xi$ by $0$ and $1$ in Eq. (\ref{39}) and $t$ by 0 and 1 in Eq. (\ref{40}) respectively. Hence, the Eqs. (\ref{35}),(\ref{39}) and (\ref{40}) are the equivalent formulation of the proposed problem (\ref{1})-(\ref{3}). To solve these PIDE, all the known and unknown functions are approximated in terms of basis functions. 
The known function of three variables $F(\eta,\xi,t)$ can be approximated  in terms of basis functions as follows
\begin{equation}\label{41}
F(\eta,\xi,t)\approx
\sum_{n = 1}^{2^{k-1}}\sum_{m = 1}^{M-1}\sum_{n' = 1}^{2^{k'-1}}\sum_{m' = 1}^{M'-1}\sum_{n'' = 1}^{2^{k''-1}}\sum_{m'' = 1}^{M''-1} F_{nmn'm'n''m''}\Psi_{nm}(\eta)\Psi_{n'm'}(\xi)\Psi_{n''m''}(t)=\Psi^{T}(\eta,\xi)F\Psi(t),
\end{equation}
where $\Psi^{T}(\eta,\xi)=\Psi^{T}(\eta)\otimes \Psi^{T}(\xi)$ and $F$ is
 matrix calculated in similar manner as $F$ in section 3. Similarly, we approximate $f$ as follows 
\begin{equation}\label{42}
f(\eta,\xi,t)\approx \Psi^{T}(\eta,\xi)F\Psi(t).
\end{equation}
Similarly, the unknown function $\Psi(\eta,\xi,t)$ is approximated in terms of basis functions as
\begin{equation}\label{43}
\Phi(\eta,\xi,t)\approx
\sum_{n = 1}^{2^{k-1}}\sum_{m = 1}^{M-1}\sum_{n' = 1}^{2^{k'-1}}\sum_{m' = 1}^{M'-1}\sum_{n'' = 1}^{2^{k''-1}}\sum_{m'' = 1}^{M''-1}\Phi_{nmn'm'n''m''}\Psi_{nm}(\eta)\Psi_{n'm'}(\xi)\Psi_{n''m''}(t)=\Psi^{T}(\eta,\xi) \Phi_{apx} \Psi(t),
\end{equation}
where
\begin{equation*}
\Phi_{apx}=
\begin{bmatrix}
\Psi_{101010} & \Psi_{202020} & \cdots & \Psi_{2^{k-1}02^{k'-1}02^{k''-1}0}\\
\Psi_{101011} & \Psi_{202020} & \cdots & \Psi_{2^{k-1}02^{k'-1}02^{k''-1}0}\\
\vdots & \vdots & \ddots & \vdots\\
\Psi_{10101(M''-1)} & \Psi_{20202(M''-1)} & \cdots & \Psi_{2^{k-1}02^{k'-1}02^{k''-1}(M''-1)}\\
\Psi_{10111(M''-1)} & \Psi_{20212(M''-1)} & \cdots & \Psi_{2^{k-1}02^{k'-1}12^{k''-1}(M''-1)}\\
\vdots & \vdots & \ddots & \vdots\\
\Psi_{101(M'-1)1(M''-1)} & \Psi_{202(M'-1)2(M''-1)} & \cdots & \Psi_{2^{k-1}02^{k'-1}(M'-1)2^{k''-1}(M''-1)}\\
\Psi_{111(M'-1)1(M''-1)} & \Psi_{212(M'-1)2(M''-1)} & \cdots & \Psi_{2^{k-1}12^{k'-1}(M'-1)2^{k''-1}(M''-1)}\\
\vdots & \vdots & \ddots & \vdots\\
\Psi_{1(M-1)1(M'-1)1(M''-1)} & \Psi_{2(M-1)2(M'-1)2(M''-1)} & \cdots & \Psi_{2^{k-1}(M-1)2^{k'-1}(M'-1)2^{k''-1}(M''-1)}
\end{bmatrix}^{T}
\end{equation*}
where, $\Phi_{apx}$ is $({2^{k-1}M 2^{k'-1}M') \times 2^{k''-1}M''}$ matrix. It is to be noted that, we have to calculate the unknown matrix $\Phi_{apx}$. \par

Now, substitute the approximated value of $F(\eta,\xi,t),\Phi(\eta,\xi,t),f_{1}(\xi,t)$ and $f_{2}(\xi,t)$ which are given by Eqs. (\ref{41})-(\ref{43}) and (\ref{26}) in the right hand side of Eq. (\ref{35}). After using the operational matrices of differentiation and integration and Lemma 2, Eq. (\ref{35}) reduces into the following form
\begin{equation*}
\begin{split}
\Phi(\eta,\xi,t)\approx\eta(\Psi^{T}(\xi)F_{4}\Psi(t))+(1-\eta)(\Psi^{T}(\xi)F_{3}\Psi(t))\hspace{10cm}\\
-\eta\int_{0}^{1}\int_{0}^{\eta}\Psi^{T}(\eta',\xi)\left[ \Phi_{apx} D_{t}^{2}+2\lambda_{1} \Phi_{apx} D_{t}+\lambda_{2}^{2}\Phi_{apx}-(D_{\xi}^{T})^{2}\Phi_{apx}-F\right] \Psi(t)d\eta'd\eta\hspace{3cm}\\
+\int_{0}^{\eta}\int_{0}^{\eta'}\Psi^{T}(\eta'',\xi)\left[ \Phi_{apx} D_{t}^{2}+2\lambda_{1} \Phi_{apx} D_{t}+\lambda_{2}^{2} \Phi_{apx}-(D_{\xi}^{T})^{2} \Phi_{apx}-F\right] \Psi(t)d\eta''d\eta',\hspace{3cm}
\end{split}
\end{equation*}
or,
\begin{equation}\label{44}
\begin{split}
\Phi(\eta,\xi,t)\approx \eta \Psi^{T}(\xi)(F_{4}-F_{3})\Psi(t)+\Psi^{T}(\xi)F_{3}\Psi(t)\hspace{10cm}\\
-\eta(\lambda_{1}{I_\eta}\otimes \Psi^{T}(\xi))(\Phi_{apx} D_{t}^{2}+2\lambda_{1} \Phi_{apx} D_t+\lambda_{2}^{2} \Phi_{apx}-(D_{\xi}^{T})^{2}\Phi_{apx}-F)\Psi(t)\hspace{4cm}\\
+\Psi^{T}(\eta,\xi)(I^{T}_{\eta})^{2}(\Phi_{apx} D_{t}^{2}+2\lambda_{1} \Phi_{apx} D_{t}+\lambda_{2}^{2}\Phi_{apx}-(D_{\xi}^{T})^{2}\Phi_{apx}-F)\Psi(t).\hspace{4cm}
\end{split}
\end{equation}
Using (i), (ii) and (iii) of Lemma 1, we can rewrite Eq. (\ref{44}) as
\begin{equation}\label{45}
\begin{split}
\Phi(\eta,\xi,t)=\Psi^{T}(\eta,\xi)\overline{P}(F_{4}-F_{3})\Psi(t)+\Psi^{T}(\eta,\xi)P F_{3}\Psi(t)\\
-\Psi^{T}(\eta,\xi)Q(\Phi_{apx} D_{t}^{2}+2\lambda_{1} \Phi_{apx} D_{t}+\lambda_{2}^{2}\Phi_{apx}-(D_{\xi}^{T})^{2}\Phi_{apx}-F)\Psi(t)\\
+\Psi^{T}(\eta,\xi)(I^{T}_{\eta})^{2}(\Phi_{apx} D_{t}^{2}+2\lambda_{1} \Phi_{apx} D_{t}+\lambda_{2}^{2}\Phi_{apx}-(D_{\xi}^{T})^{2}\Phi_{apx}-F)\Psi(t).
\end{split}
\end{equation}
Now Eq. (\ref{45}) can be rewritten as
\begin{equation}\label{46}
\Phi(\eta,\xi,t)\approx \Psi^{T}(\eta,\xi)X\Psi(t),
\end{equation}
in which
\begin{equation*}
X=\overline{P}(F_{4}-F_{3})+P F_{1}+((I^{T}_{\eta})^{2}-Q)(\Phi_{apx}D_{t}^{2}+2\lambda_{1} \Phi_{apx} D_{t}+\lambda_{2}^{2}\Phi_{apx}-(D_{\xi}^{T})^{2}\Phi_{apx}-F).
\end{equation*}

Now, substitute the approximated value of $\Phi(\eta,\xi,t)$ from Eq.(\ref{46})
and $f_{3}(\eta,t),f_{4}(\eta,t)$ from Eq. (\ref{26}) in Eq. (\ref{39}) and
using the operational matrices and Lemma 2, we get
\begin{equation}\label{47}
\begin{split}
\Phi(\eta,\xi,t)\approx \xi(\Psi^{T}(\eta)F_{6}\Psi(t))+(1-\xi)\Psi^{T}(\eta)F_{5}\Psi(t)\\
-\xi\int_{0}^{1}\int_{0}^{\xi}\Psi^{T}(\eta,\xi)(D_{\xi}^{T})^{2}X\Psi(t)d\xi'd\xi\\
+\int_{0}^{\xi}\int_{0}^{\xi'}\Psi^{T}(\eta,\xi)(D_{\xi}^{T})^{2}X\Psi(t)d\xi''d\xi'.
\end{split}
\end{equation}
We can rewrite the above Eq. as
\begin{equation}\label{48}
\begin{split}
\Phi(\eta,\xi,t)\approx \xi \Psi^{T}(\eta)(F_{6}-F_{5})\Psi(t)+\Psi^{T}(\eta)F_{5}\Psi(t)\\
-\xi(\Psi^{T}\otimes\lambda_{1}{I_\eta})(D_{\xi}^{T})^{2}X\Psi(t)+\Psi^{T}(\eta,\xi)(P^{T}_{\eta})^{2}(D_{\xi}^{T})^{2}X\Psi(t).
\end{split}
\end{equation}
By using (iv),(v) and (vi) of Lemma (1), above Eq. can be written as
\begin{equation}\label{49}
\Phi(\eta,\xi,t)\approx \Psi^{T}(\eta,\xi)Y\Psi(t),
\end{equation}
where
\begin{equation*}
Y=\overline{R}(F_{6}-F_{5})+R
F_{5}+((I^{T}_{\xi})^{2}-\overline{Q})(D_{\xi}^{T})^{2}X.
\end{equation*}

In the next step, Eq.(\ref{49})  can be taken as an approximation for $\Phi(\eta,\xi,t)$. Substituting the approximated values of $f(\eta,\xi,t)$, $\Phi(\eta,\xi,t)$   together with  operational
 matrices of differentiation and integration in the right hand side of Eq.(\ref{40}), we get
 \begin{equation*}
\Phi(\eta,\xi,t)\approx
\Psi^{T}(\eta,\xi)F\Psi(t)+\Psi^{T}(\eta,\xi)YD_{t}^{2}I_{t}^{2}\Psi(t).
 \end{equation*}
 On combining above approximation $\Phi(\eta,\xi,t)$ with Eq.(\ref{43}), we get 
 \begin{equation}\label{50}
 \Psi^{T}(\eta,\xi)\Phi_{apx} \Psi(t)= \Psi^{T}(\eta,\xi)F\Psi(t)+\Psi^{T}(\eta,\xi)\widetilde{Y}D_{t}^{2}I_{t}^{2}\Psi(t),
 \end{equation}
$$\widetilde{Y}=\overline{R}(F_{6}-F_{5})+R F_{5}+((I^{T}_{\xi})^{2}-\overline{Q})(D_{\xi}^{T})^{2}\widetilde{X}~~~~~~~~~~~~~~~~~~~~~~~~~~~~~~~~~~~~~~$$
\begin{equation*}
\widetilde{X}=\overline{P}(F_{4}-F_{3})+PF_{3}+((I^{T}_{\eta})^{2}-Q)(\Psi D_{t}^{2}+2\lambda_{1}\Phi_{apx} D_{t}+\lambda_{2}^{2} \Phi_{apx}-(D_{\xi}^{T})^{2} \Phi_{apx}-F).
\end{equation*}
Eq.(\ref{50}) can be reduced into the following matrix Eq.
\begin{equation}\label{51}
\Phi_{apx}+X_{1}\Phi_{apx} Y_{1}+X_{2} \Phi_{apx} Y_{2}=Z
\end{equation}
which is in the form of Sylvester Eq..\\
Where,
$$~~X_{1}=((I^{T}_{\xi})^{2}-\overline{Q})(D_{\xi}^{T})^{2}((I^{T}_{\eta})^{2}-Q),$$
$$ Y_{1}=-(D_{t}^{2}+2\lambda_{1} D_{t}+\lambda_{2}^{2})D_{t}^{2}I_{t}^{2},$$
$$~~~~~~~~~~~X_{2}=((I^{T}_{\xi})^{2}-\overline{Q})(D_{\xi}^{T})^{2}((I^{T}_{\eta})^{2}-Q)(D_{\xi}^{T})^{2},$$
$$Y_{2}=D_{t}^{2}I_{t}^{2}~~~~~~~~~~~~~~~~~~~~~~~~~~~~$$
 and
\begin{equation*}
\begin{split}
Z=F+R F_{5}D_{t}^{2}I_{t}^{2}+\overline{R}(F_{6}-F_{5})D_{t}^{2}I_{t}^{2}+((I^{T}_{\xi})^{2}-\overline{Q})(D_{\xi}^{T})^{2}\\
(\overline{P}(F_{4}-F_{3})+P F_{3}-((I^{T}_{\eta})^{2}-Q)F)D_{t}^{2}I_{t}^{2}.
\end{split}
\end{equation*}
Now Eq.(\ref{51}) can be re-written as follows
\begin{equation}\label{51_1}
[I+X_{1}(Y_{1}^{T}\otimes I)+X_{2}(Y_{2}^{T}\otimes I)]\Phi_{apx}=Z,
\end{equation}
where $I$ is the identity matrix.

Finally, we get the system of equation in the form of Eq.(\ref{51_1}) which is known as the Sylvester equation. Some authors proposed numerical scheme to solve Sylvester equation (for instant see \cite{bartels1972algorithm}). In this article Sylvester equation (\ref{51}) is solved for $\Phi_{apx}$ by robust Krylov subspace iterative method (i.e.generalized BICGSTAB, \cite{tohidi2014convergence}) and hence by Eq.(\ref{51_1}), we get the approximate solution of Eqs.(\ref{1})-(\ref{3}) in terms of Legendre wavelet at small number of basis functions.
\section{Numerical experiments}
In this section, we provide some numerical examples to analyze the applicability of proposed LWOMM for two dimensional hyperbolic telegraph equation (\ref{1}). For the application of proposed method, we have included three literature examples and examine the applicability and effectiveness (Numerical simulations have been done with the help of Matlab).  Since, the proposed method is a numerical technique based on the Legendre wavelet function, we can reach to the exact solutions if the solutions of the considered hyperbolic telegraph equations are in polynomial forms. Moreover, the presented method gain spectral accuracy for dealing with two-dimensional hyperbolic telegraph equation which has exact solutions in non-polynomial forms. We take $k=k'=k''=3, M=M'=M''=4,5,6,7$ \& $t=1$ also computed results have been shown in the Table in terms of relative errors, $l^{2}$ and $l^{\infty}$. The results are compared with Mittal and Bhatia \cite{mittal_2014} and found better accuracy by Legendre wavelet at small number of basis function. In order to illustrate the performance of the LWOMM, we have considered the maximum absolute error, which is denoted by $e_{n}$. Also, in some figures the history of error between the analytical and numerical solution, which is computed by the proposed LWOMM, is represented as $e_{n}=\Phi-\Phi_{apx}$. Further, to assess the performance of the method LWOMM we computed $l^{2}$ and $l^{\infty}$ norms errors.

\textbf{Example 1.}In this example, we consider the HTE Eq.(1) in the domain $(\eta,\xi,t)\in \Omega$ and
$F(\eta,\xi,t)=(-2\lambda_{1}+\lambda_{2}^2-1)e^{-t}\sinh \eta \sinh
\xi$. The initial and the Dirichlet boundary conditions are given by
\begin{equation}\label{52}
\left\{ \begin{array}{l} \Phi(\eta,\xi,0)=\sinh(\eta)\sinh(\xi), \vspace{0.2cm}\\
\Phi_{t}(\eta,\xi,0)=-\sinh(\eta)\sinh(\xi) \vspace{0.2cm}
\end{array}\right.
\end{equation}
\begin{equation}\label{53}
\left\{ \begin{array}{l} \Phi(0,\xi,t)=0,~\Phi(1,\xi,t)=e^{-t}\sinh(1)\sinh(\xi) \vspace{0.2cm}\\
\Phi(\eta,0,t)=0,~\Phi(\eta,1,t)=e^{-t}\sinh(\eta)\sinh(1)
\vspace{0.2cm}.
\end{array}\right.
\end{equation}
The exact solution is given by $\Phi(\eta,\xi,t)=e^{-t}\sinh(\eta)\sinh(\xi)$.
In this example, we have taken $\lambda_{1}=10, \lambda_{2}=5$ and solved for $N=4,5,6$ and $7$.
\captionsetup[figure]{labelfont={bf},labelformat={default},labelsep=period,name={Fig.}}
\begin{figure}[H]
	\vspace{-6cm}
	\includegraphics[width=\textwidth]{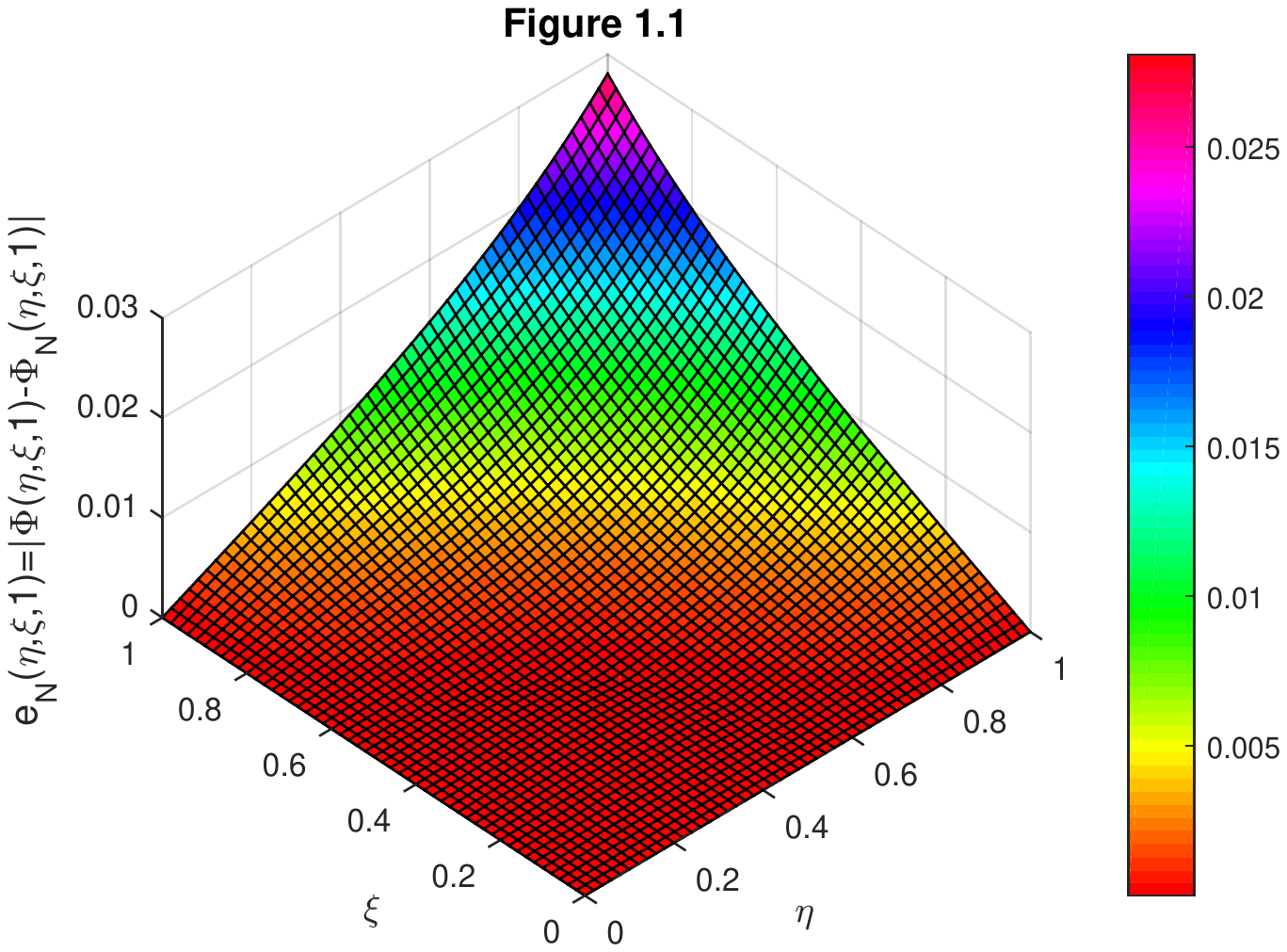}
	\vspace{-7cm}
	\caption{The spatio-temporal distribution of an error between the exact solution and its numerical solution utilising the LWOMM for Example 1 at $N=4$.} 	\label{fig1}
\end{figure}

\begin{figure}[H]
	\vspace{-7cm}
	\includegraphics[width=\textwidth]{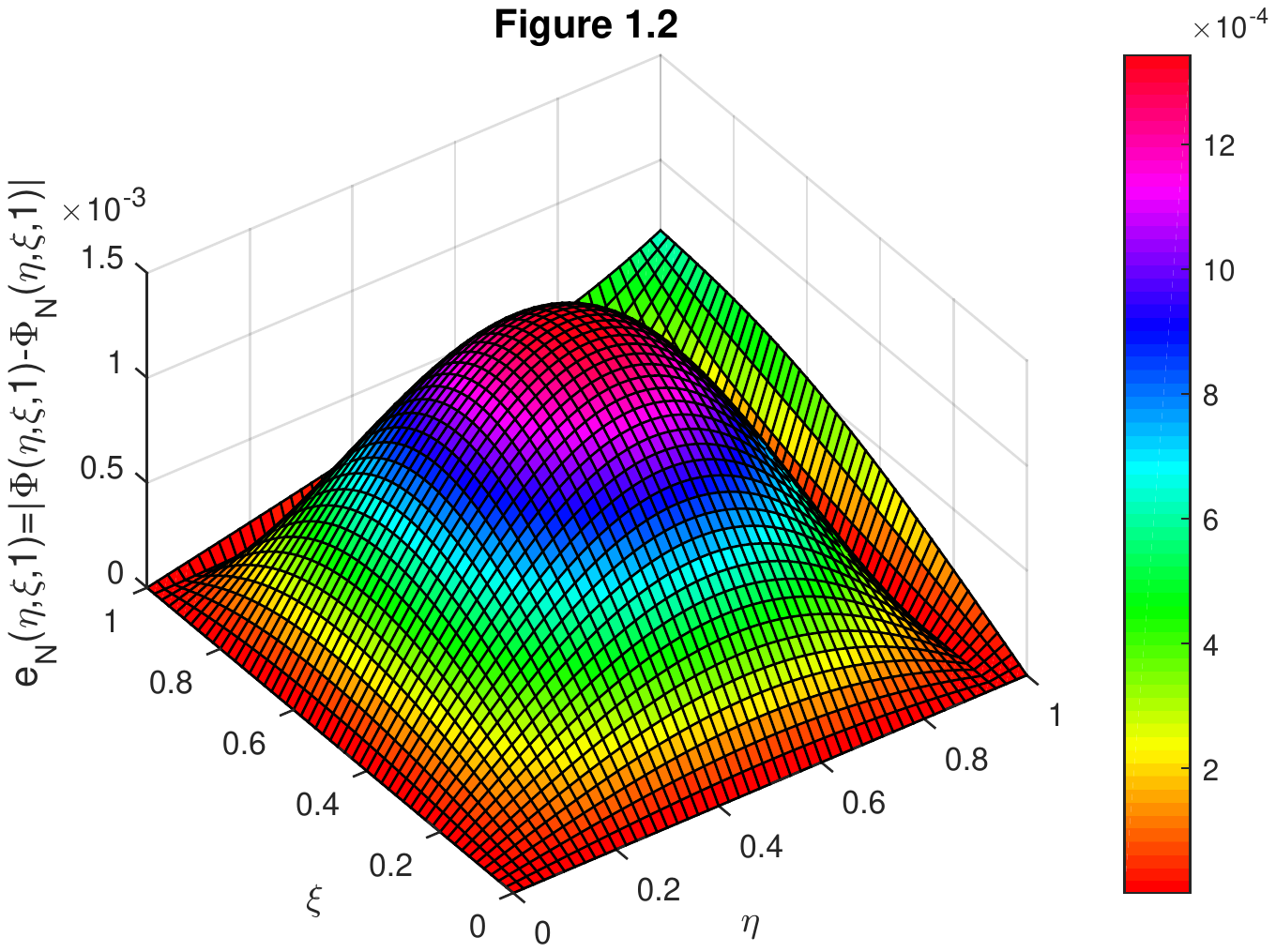}
	\vspace{-7cm}
	\caption{The spatio-temporal distribution of an error between the exact solution and its numerical solution utilising the LWOMM for Example 1 at $N=5$.} 
	\label{fig2}
\end{figure}

\begin{figure}[H]
	\vspace{-6cm}
	\includegraphics[width=\textwidth]{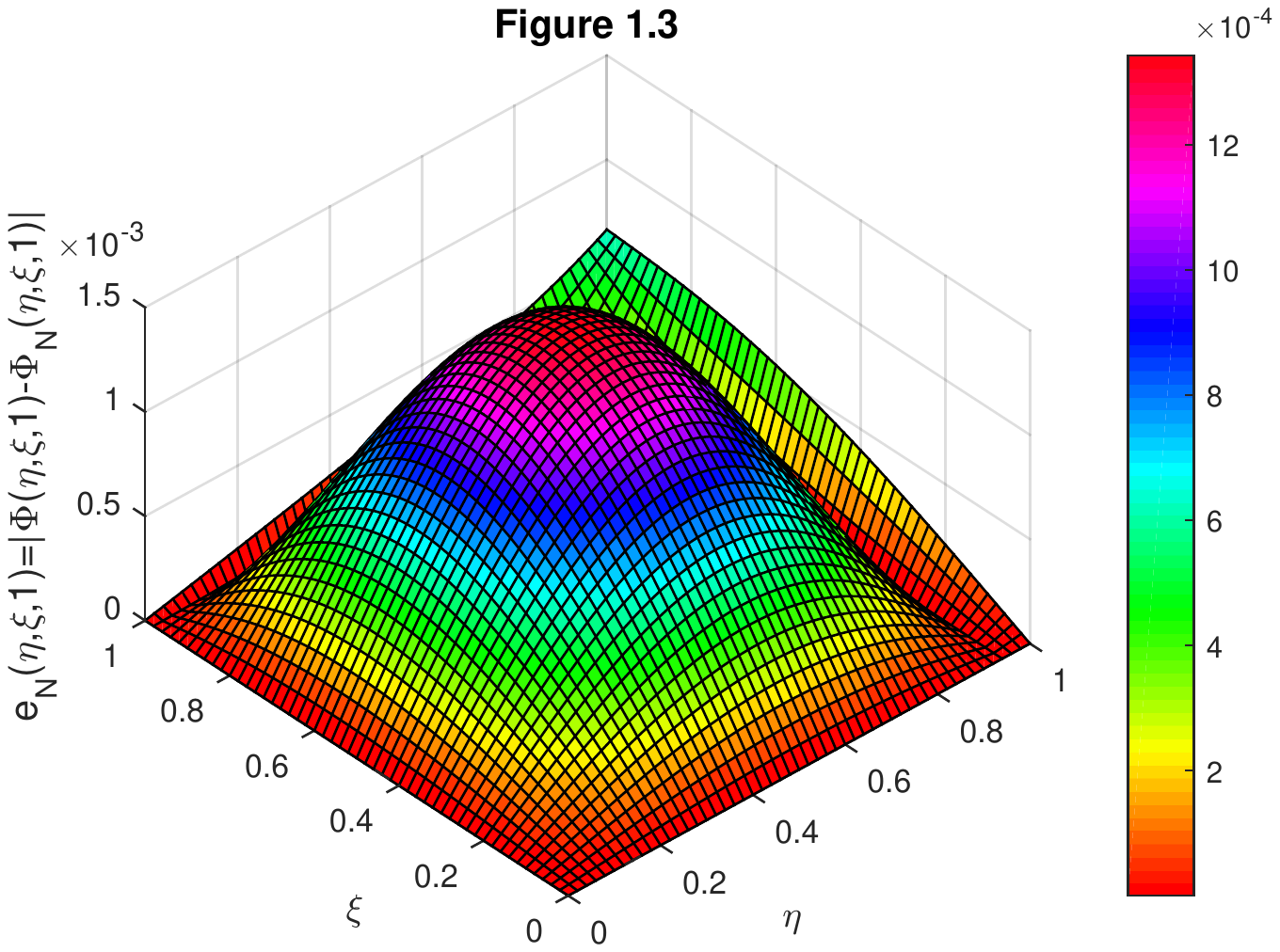}
	\vspace{-7cm}
	\caption{The spatio-temporal distribution of an error between the exact solution and its numerical solution utilising the LWOMM for Example 1 at $N=6$.} 
	\label{fig3}
\end{figure}
\begin{figure}[H]
	\vspace{-7cm}
	\includegraphics[width=\textwidth]{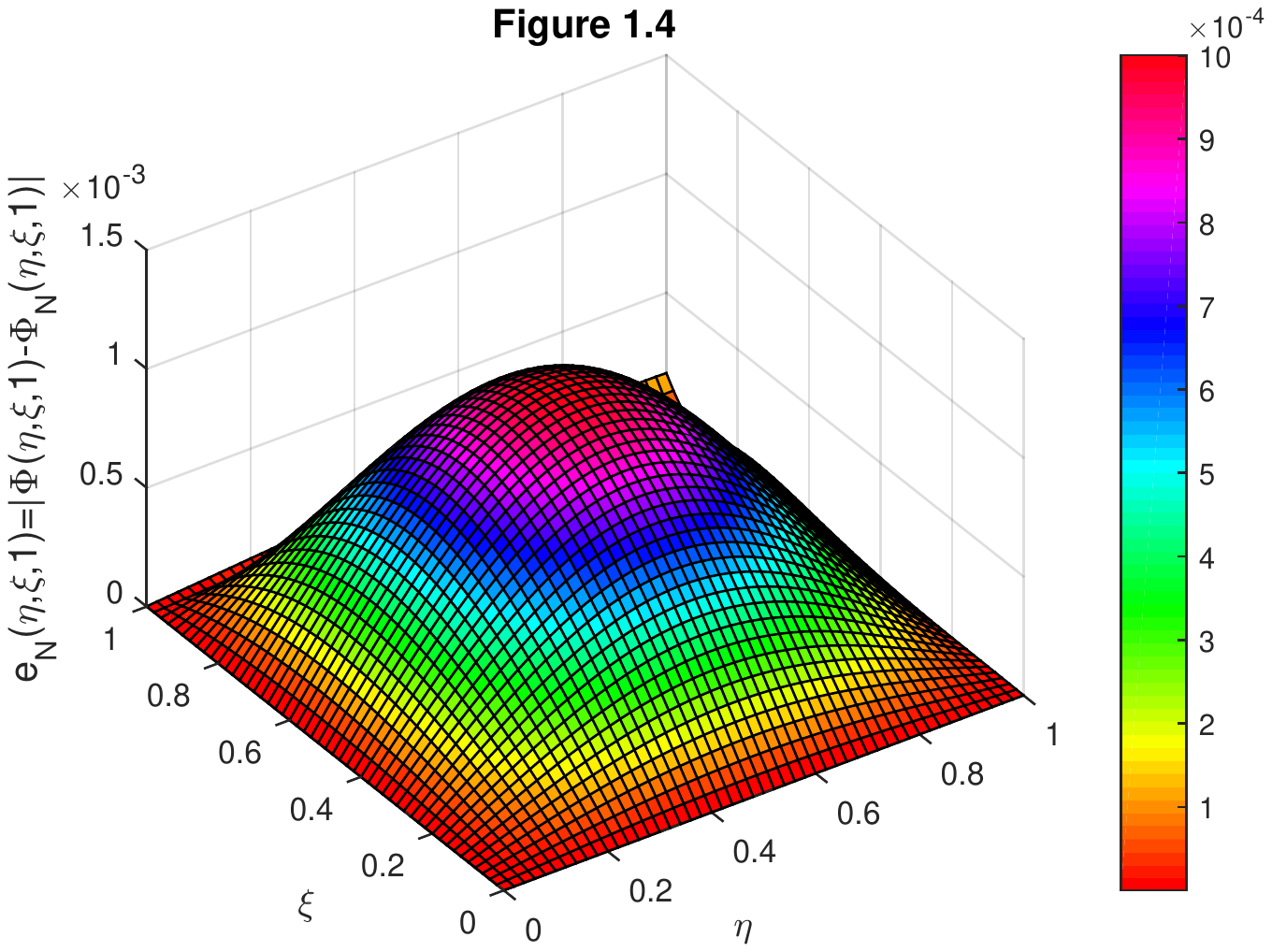}
	\vspace{-7cm}
	\caption{The spatio-temporal distribution of an error between the exact solution and its numerical solution utilising the LWOMM for Example 1 at $N=7$.} 
	\label{fig4}
\end{figure}
\begin{center}
\footnotesize
\captionof{table}{Absolute error of LWOMM at $e_{N}(\eta,\xi,1)$ for values of $N=4,5,6,7$ of Example 1 } 
\begin{tabular}{ ccccc }
\hline
$(\eta,\xi)$ & $N=4$ $N=5$ & $N=6$ & $N=7$ \\
\hline
(0.0,0.0) & 9.8020e-16 & 1.0604e-14 & 5.8261e-14  & 4.6324e-13\\
(0.1,0.1) & 7.5943e-05 & 2.4407e-04 & 1.0070e-04  & 7.5510e-05\\
(0.2,0.2) & 2.4400e-04 & 9.3702e-04 & 3.7520e-04  & 2.8222e-04\\
(0.3,0.3) & 3.1769e-04 & 2.0000e-03 & 7.4580e-04  & 5.5994e-04\\
(0.4,0.4) & 1.4792e-05 & 3.1000e-03 & 1.1000e-04  & 8.2115e-04\\
(0.5,0.5) & 1.0000e-03 & 4.2000e-03 & 1.3000e-03  & 9.7446e-04\\
(0.6,0.6) & 3.2000e-03 & 5.0000e-03 & 1.3000e-03  & 9.5378e-04\\
(0.7,0.7) & 6.7000e-02 & 5.2000e-03 & 9.6017e-04  & 7.4814e-04\\
(0.8,0.8) & 1.2100e-02 & 4.8000e-03 & 3.9005e-04  & 4.2101e-04\\
(0.9,0.9) & 1.9200e-02 & 3.8000e-03 & 2.5801e-04  & 9.4096e-05\\
(1.0,1.0) & 2.8100e-02 & 2.5000e-03 & 6.6554e-04  & 1.6198e-04\\
\hline  
\label{tab1}
\end{tabular}
\end{center}
\begin{center}
\footnotesize
	\captionof{table}{Norm error of LWOMM for values of $N=4,5,6,7$ of Example 1} \hspace{10cm}
\begin{tabular}{ cccccc }
\hline
Norm & $N=5$ & $N=4$ & $N=6$ & $N=7$ & Mittal and Bhatia \cite{mittal_2014}\\
\hline
$l^{2}$      & 2.3900e-02 & 1.1300e-02 & 2.6000e-03  & 2.1000e-03 & 1.7174e-04\\
$l^{\infty}$ & 1.9200e-02 & 5.2000e-03 & 1.3000e-03  & 9.7446e-04 & 5.6395e-04\\
\hline  
\label{tab2}
\end{tabular}
\end{center}
\newpage
\textbf{Example 2.} In this example, we consider the HTE Eq.(1) in the domain $(\eta,\xi,t)\in \Omega$, with $\lambda_{1} =\lambda_{2}=1$ and $F(\eta,\xi,t)=2\cos(t) \sin(\eta) \sin(\xi) - 2 \sin(t) \sin(\eta) \sin(\xi)$. The initial and boundary conditions are given by
\begin{equation}\label{54}
\left\{ \begin{array}{l} \Phi(\eta,\xi,0)=\sin(\eta)\sin(\xi), \vspace{0.2cm}\\
\Phi_{t}(\eta,\xi,0)=0 \vspace{0.2cm}
\end{array}\right.
\end{equation}
and
\begin{equation}\label{55}
\left\{ \begin{array}{l} \Phi(0,\xi,t)=0,~\Phi(1,\xi,t)=\cos(t)\sin(1)\sin(\xi) \vspace{0.2cm}\\
\Phi(\eta,0,t)=0,~\Phi(\eta,1,t)=\cos(t)\sin(\eta)\sin(1).
\vspace{0.2cm}
\end{array}\right.
\end{equation}
The exact solution is given by
\begin{equation*}
\Phi(\eta,\xi,t)=\cos{t}\sin(\eta)\sin(\xi).
\end{equation*}
\begin{figure}[H]
	\vspace{-7cm}
	\includegraphics[width=\textwidth]{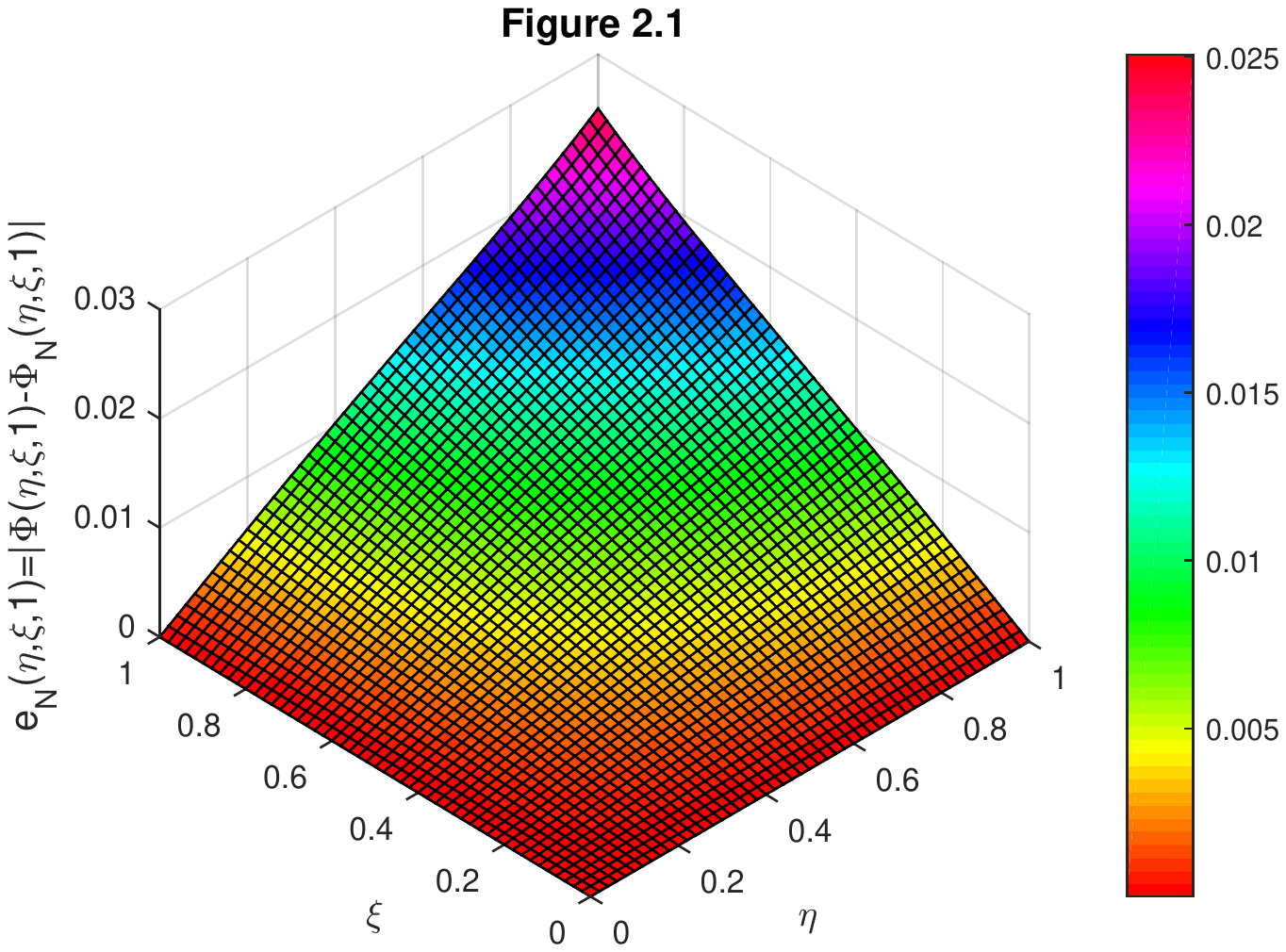}
	\vspace{-7cm}
	\caption{The spatio-temporal distribution of an error between the exact solution and its numerical solution utilising the LWOMM for Example 1 at $N=4$.} 
	\label{fig5}
\end{figure}

\begin{figure}[H]
	\vspace{-7cm}
	\includegraphics[width=\textwidth]{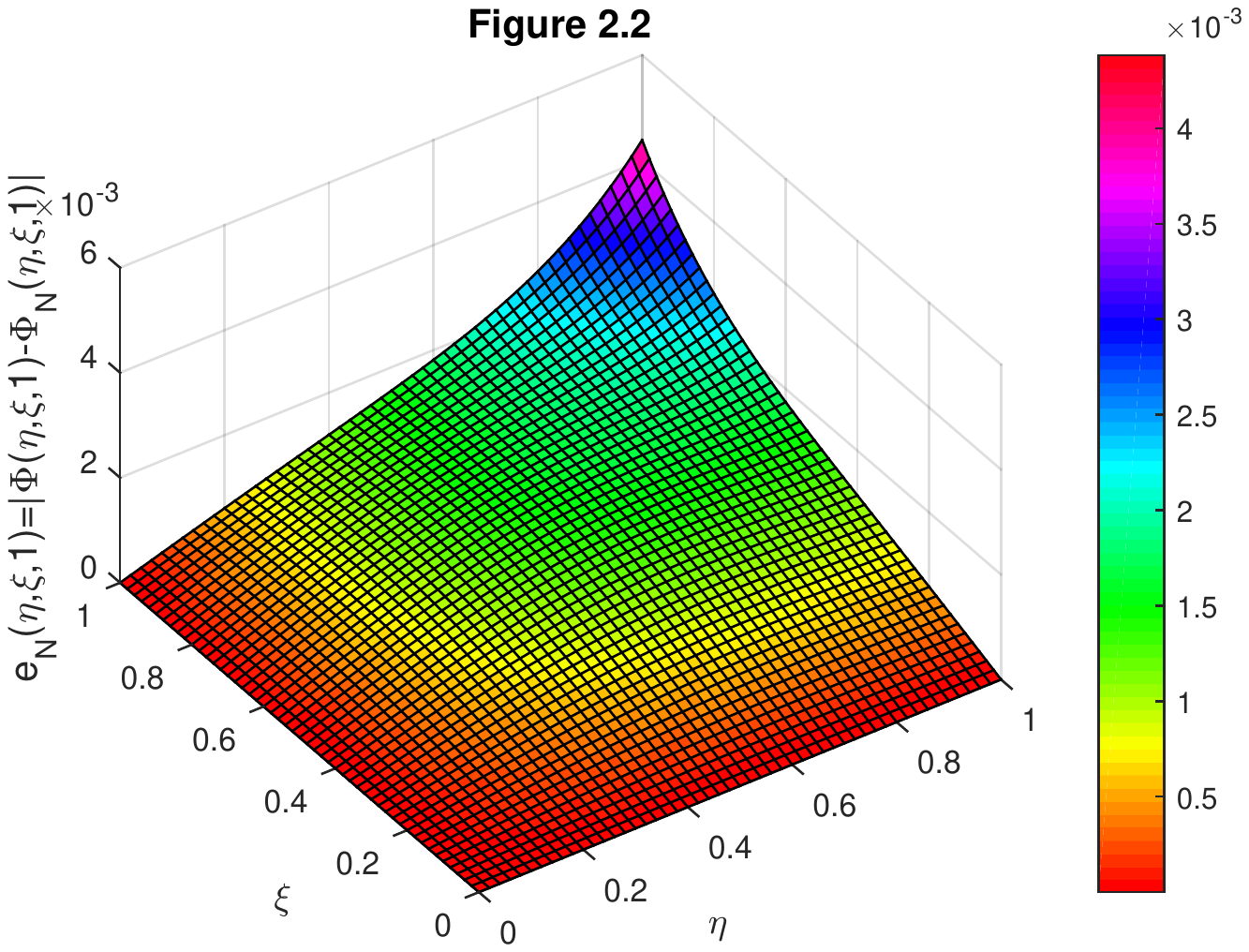}
	\vspace{-7cm}
	\caption{The spatio-temporal distribution of an error between the exact solution and its numerical solution utilising the LWOMM for Example 1 at $N=5$.} 
	\label{fig6}
\end{figure}

\begin{figure}[H]
	\vspace{-6cm}
	\includegraphics[width=\textwidth]{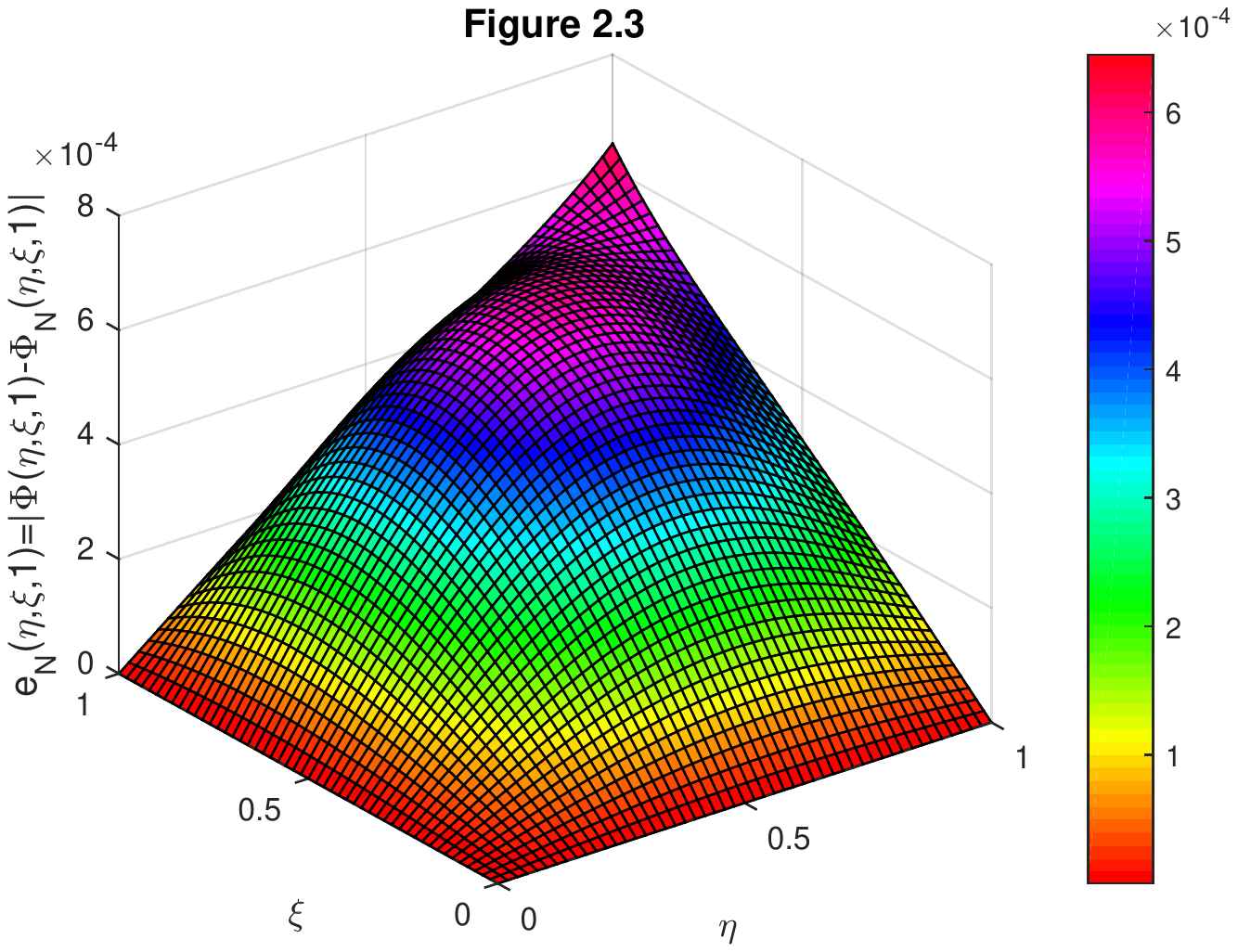}
	\vspace{-7cm}
	\caption{The spatio-temporal distribution of an error between the exact solution and its numerical solution utilising the LWOMM for Example 1 at $N=6$.} 
	\label{fig7}
\end{figure}
\begin{figure}[H]
	\vspace{-7cm}
	\includegraphics[width=\textwidth]{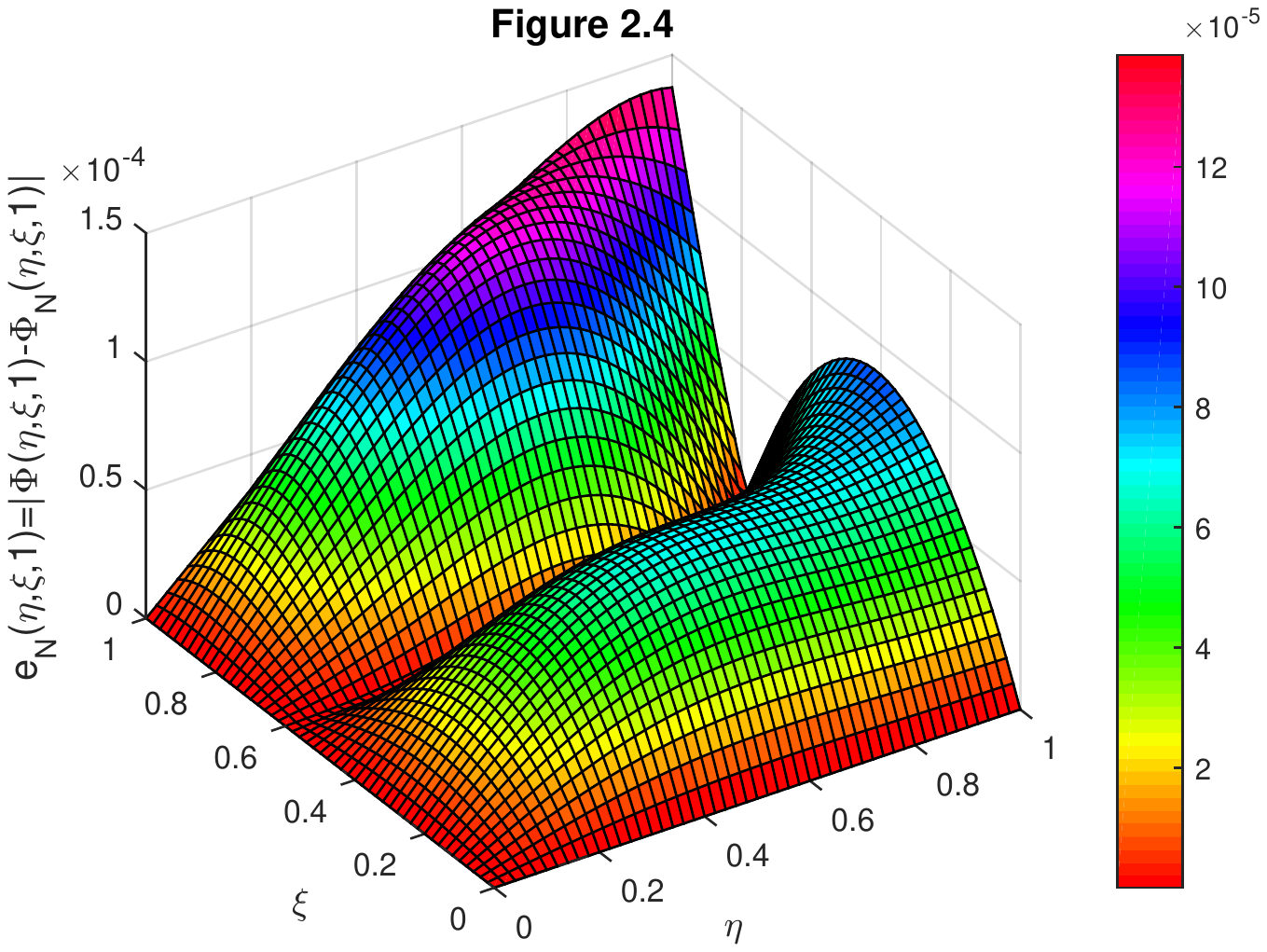}
	\vspace{-7cm}
	\caption{The spatio-temporal distribution of an error between the exact solution and its numerical solution utilising the LWOMM for Example 1 at $N=7$.} 
	\label{fig8}
\end{figure}
\begin{center}
\footnotesize
	\captionof{table}{Absolute error of LWOMM at $e_{N}(\eta,\xi,1)$ for values of $N=4,5,6,7$ of Example 2} 
\begin{tabular}{ ccccc }
\hline
$(\eta,\xi)$ & $N=4$ & $N=5$ & $N=6$ & $N=7$ \\
\hline
(0.0,0.0) & 6.6368e-14 & 1.0240e-15 & 3.9761e-18  & 3.8272e-18\\
(0.1,0.1) & 2.7356e-04 & 7.1939e-05 & 2.9464e-04  & 1.2611e-05\\
(0.2,0.2) & 1.1000e-03 & 2.8673e-04 & 1.1185e-04  & 3.7931e-05\\
(0.3,0.3) & 2.4000e-03 & 6.1143e-04 & 2.3039e-04  & 5.6171e-05\\
(0.4,0.4) & 4.1000e-03 & 9.8017e-04 & 3.6060e-04  & 5.3182e-05\\
(0.5,0.5) & 6.3000e-03 & 1.3000e-03 & 4.7503e-04  & 2.6537e-05\\
(0.6,0.6) & 8.8000e-03 & 1.6000e-03 & 5.5010e-04  & 1.3745e-05\\
(0.7,0.7) & 1.1800e-02 & 1.8000e-03 & 5.7469e-04  & 5.1243e-05\\
(0.8,0.8) & 1.5300e-02 & 2.0000e-03 & 5.6041e-04  & 7.5205e-05\\
(0.9,0.9) & 1.9600e-02 & 2.7000e-03 & 5.5311e-04  & 9.9891e-05\\
(1.0,1.0) & 2.5100e-02 & 4.4000e-03 & 6.4481e-04  & 1.3731e-05\\
\hline
\label{tab3}  
\end{tabular}
\end{center}
\begin{center}
\footnotesize
	\captionof{table}{Norm error of LWOMM for values of $N=4,5,6,7$ of Example 2} 
\begin{tabular}{ cccccc }
\hline
Norm & $N=4$ & $N=5$ & $N=6$ & $N=7$ & Mittal and Bhatia \cite{mittal_2014}\\
\hline
$l^{2}$      & 3.9100e-02 & 6.3000e-03 & 1.4000e-03  & 2.1077e-04 & 9.8870e-05\\
$l^{\infty}$ & 2.5100e-02 & 4.4000e-03 & 6.4481e-03  & 1.3731e-04 & 2.4964e-04\\
\hline
\label{tab4}  
\end{tabular}
\end{center}
\newpage
\textbf{Example 3.} In this example, we consider the HTE Eq.(1) in the domain $(\eta,\xi,t)\in \Omega$ with $\lambda_{1}=10, \lambda_{2}=5$ and $F(\eta,\xi,t)=22\cos(t)\sinh(\eta)\sinh(\xi) - 20\sin(t)\sinh(\eta)\sinh(\xi)$. The initial and
Dirichlet boundary conditions are given below
\begin{equation}\label{56}
\left\{ \begin{array}{l} \Phi(\eta,\xi,0)=\sinh(\eta)\sinh(\xi), \vspace{0.2cm}\\
\Phi_{t}(\eta,\xi,0)=0 \vspace{0.2cm}
\end{array}\right.
\end{equation}
and
\begin{equation}\label{57}
\left\{ \begin{array}{l} \Phi(0,\xi,t)=0,~\Phi(1,\xi,t)=\cos(t)\sinh(1)\sinh(\xi) \vspace{0.2cm}\\
\Phi(\eta,0,t)=0,~\Phi(\eta,1,t)=\cos(t)\sinh(\eta)\sinh(1)
\vspace{0.2cm}
\end{array}\right.
\end{equation}
The exact solution is given by
\begin{equation*}
\Phi(\eta,\xi,t)=\cos{t}\sinh(\eta)\sinh(\xi).
\end{equation*}
\begin{figure}[H]
	\vspace{-7cm}
	\includegraphics[width=\textwidth]{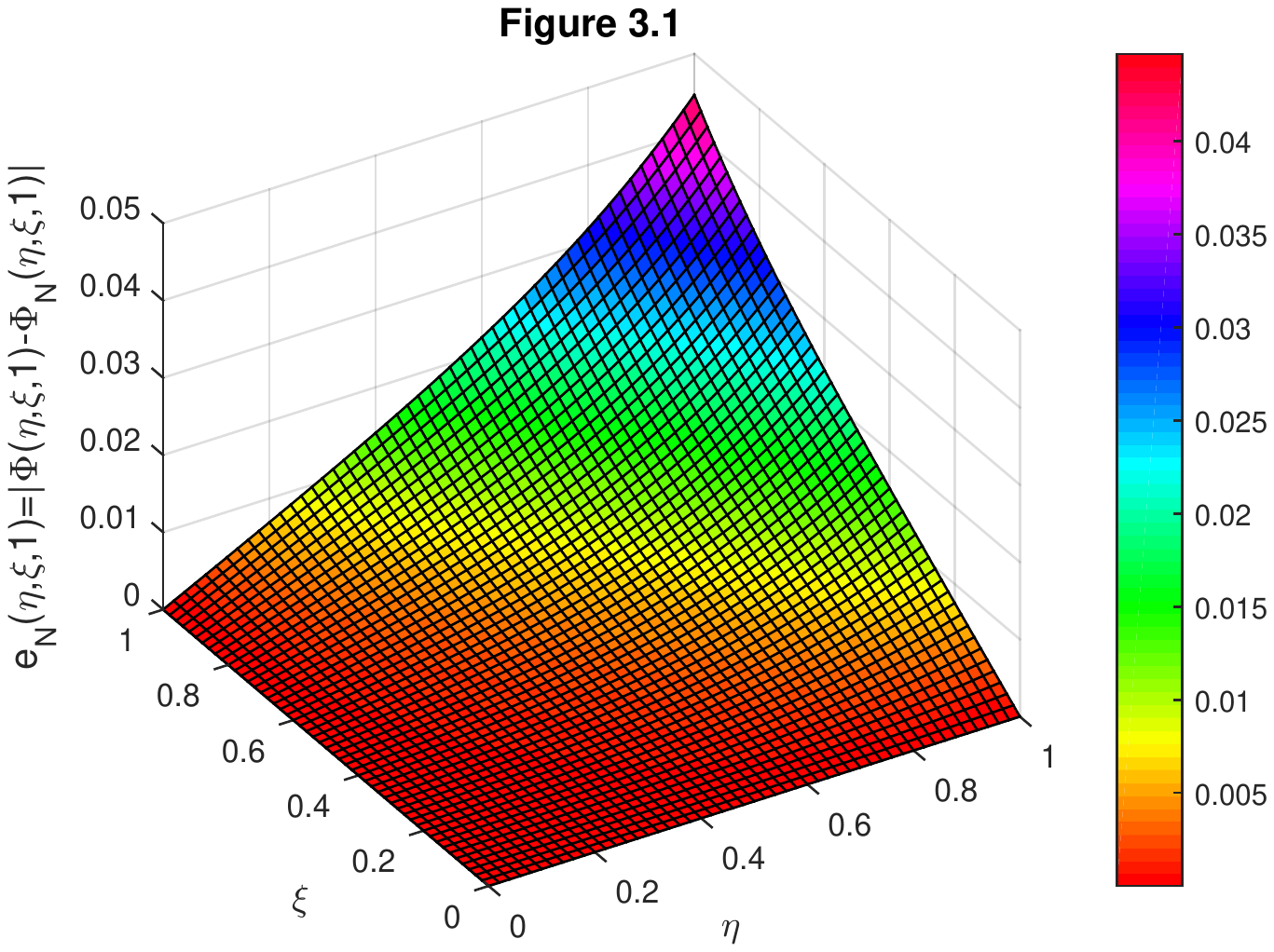}
	\vspace{-7cm}
	\caption{The spatio-temporal distribution of an error between the exact solution and its numerical solution utilising the LWOMM for Example 1 at $N=4$.} 
	\label{fig9}
\end{figure}

\begin{figure}[H]
	\vspace{-7cm}
	\includegraphics[width=\textwidth]{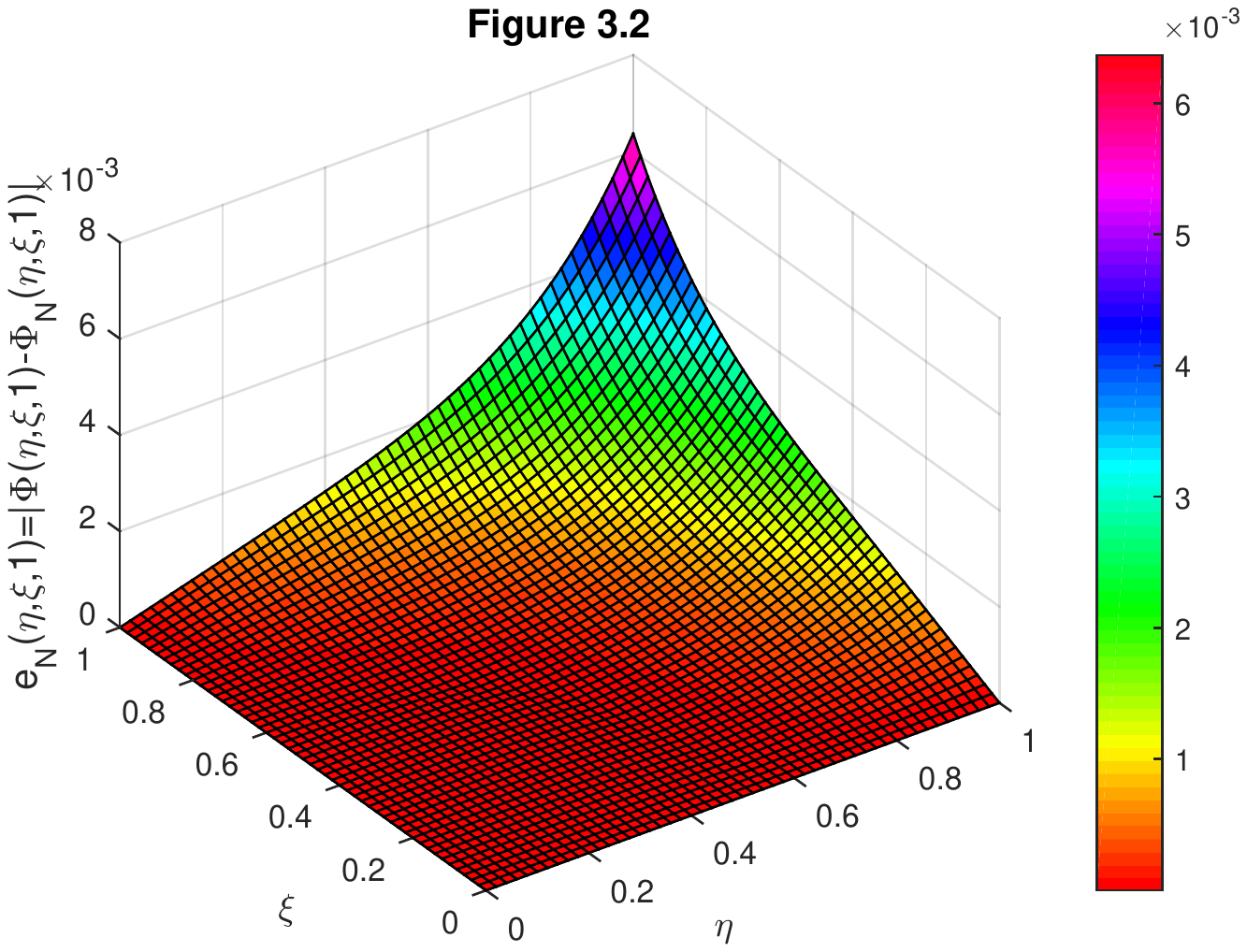}
	\vspace{-7cm}
	\caption{The spatio-temporal distribution of an error between the exact solution and its numerical solution utilising the LWOMM for Example 1 at $N=5$.} 
	\label{fig10}
\end{figure}

\begin{figure}[H]
	\vspace{-7cm}
	\includegraphics[width=\textwidth]{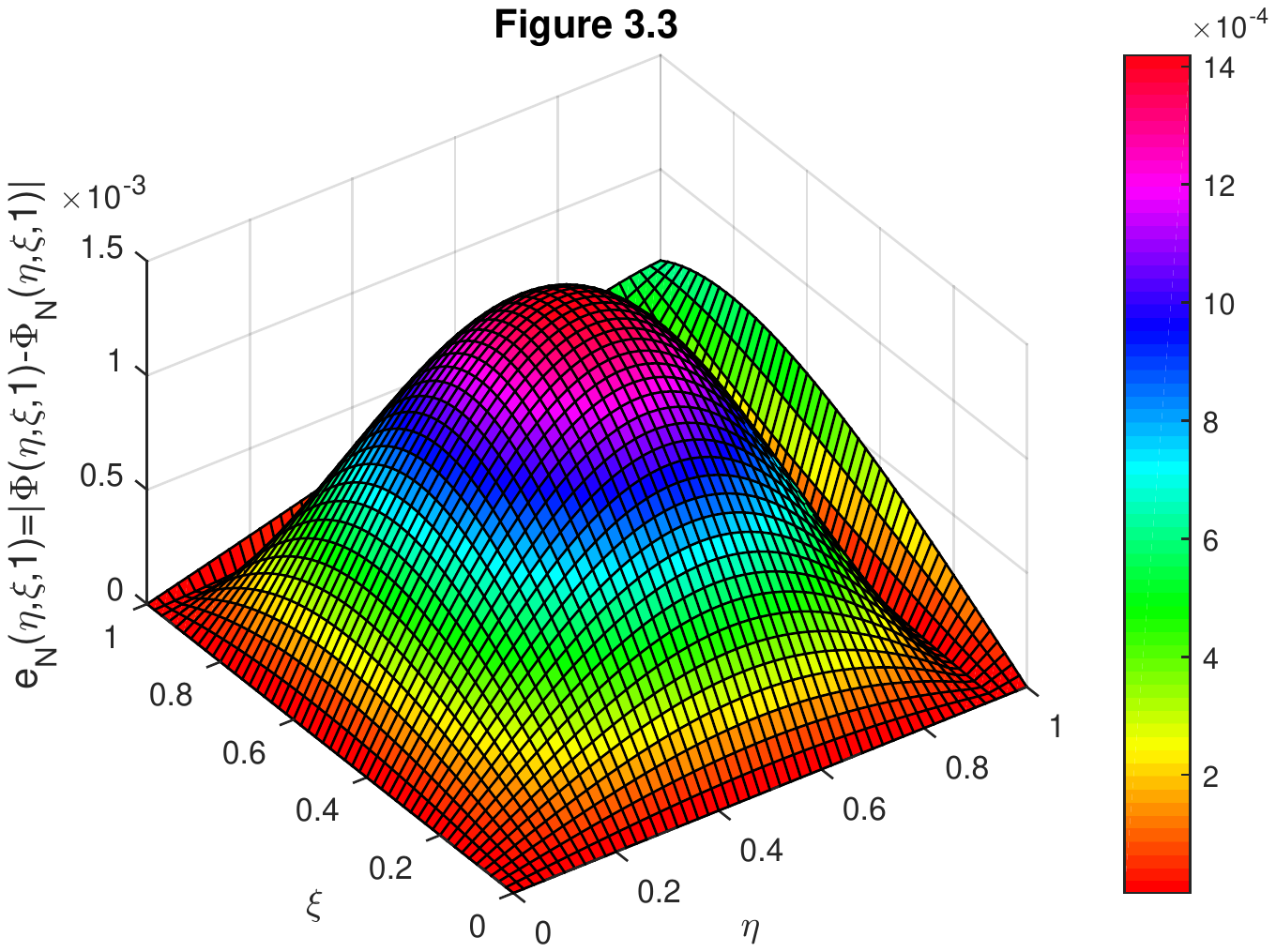}
	\vspace{-7cm}
	\caption{The spatio-temporal distribution of an error between the exact solution and its numerical solution utilising the LWOMM for Example 1 at $N=6$.} 
	\label{fig11}
\end{figure}
\begin{figure}[H]
	\vspace{-6cm}
	\includegraphics[width=\textwidth]{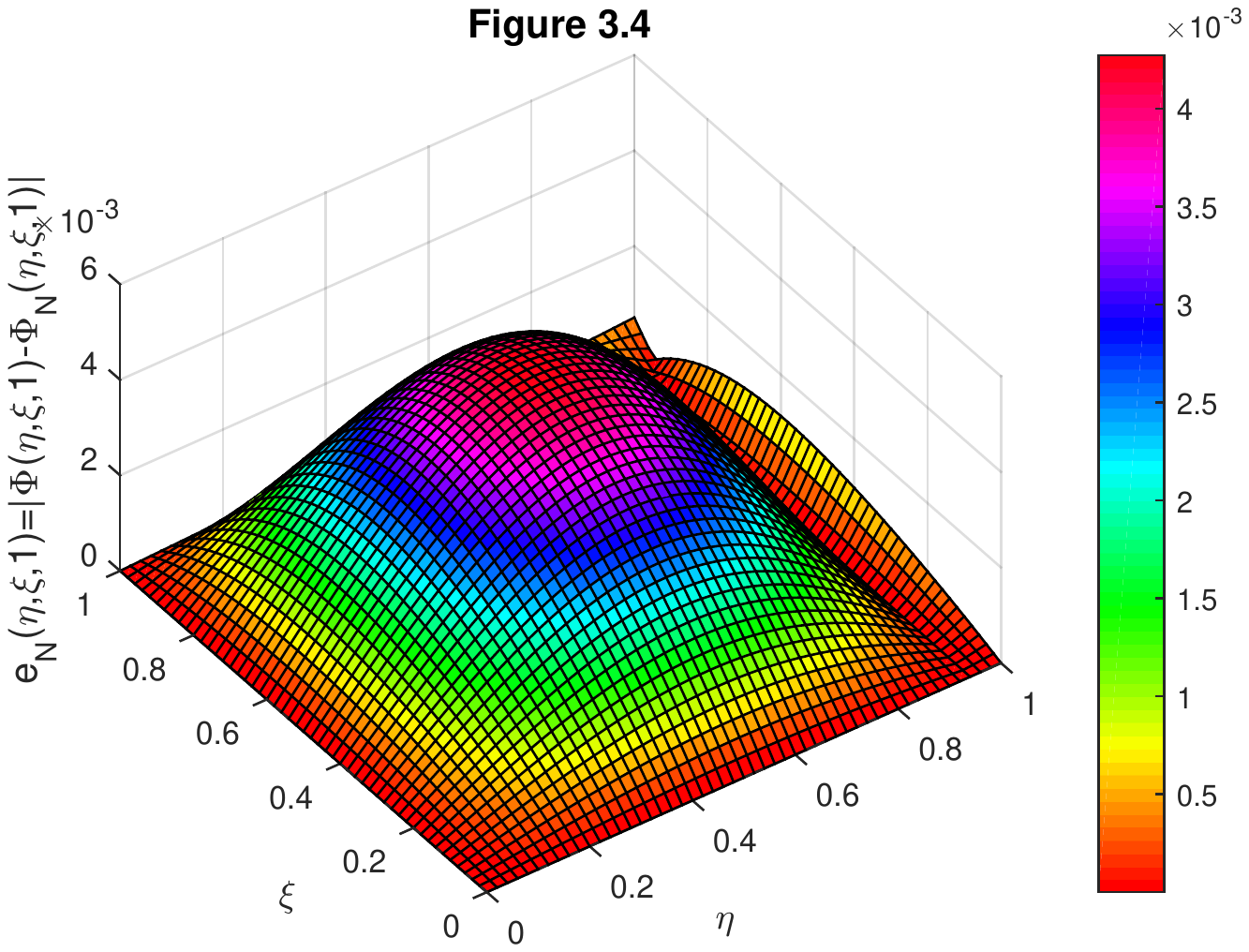}
	\vspace{-7cm}
	\caption{The spatio-temporal distribution of an error between the exact solution and its numerical solution utilising the LWOMM for Example 1 at $N=7$.} 
	\label{fig12}
\end{figure}
\begin{center}
\footnotesize
\captionof{table}{Absolute error of LWOMM at $e_{N}(\eta,\xi,1)$ for values of $N=4,5,6,7$ of Example 3} 
\begin{tabular}{ ccccc }
\hline
$(\eta,\xi)$ & $N=4$ & $N=5$ & $N=6$ & $N=7$ \\
\hline
(0.0,0.0) & 6.7126e-14 & 1.0492e-15 & 8.3206e-17  & 1.0593e-18\\
(0.1,0.1) & 6.3835e-05 & 9.9583e-06 & 3.3025e-04  & 1.0929e-04\\
(0.2,0.2) & 3.1414e-04 & 3.1003e-05 & 1.2000e-03  & 4.0642e-04\\
(0.3,0.3) & 9.3733e-04 & 4.3952e-05 & 1.3000e-03  & 8.0493e-04\\
(0.4,0.4) & 2.2000e-03 & 2.2048e-05 & 3.4000e-03  & 1.2000e-03\\
(0.5,0.5) & 4.5000e-03 & 7.3449e-05 & 4.1000e-03  & 1.4000e-03\\
(0.6,0.6) & 8.1000e-03 & 3.0522e-04 & 4.1000e-03  & 1.3000e-03\\
(0.7,0.7) & 1.3600e-02 & 7.8217e-04 & 3.4000e-03  & 9.6647e-04\\
(0.8,0.8) & 2.1300e-02 & 1.7000e-03 & 2.1000e-03  & 3.4296e-04\\
(0.9,0.9) & 3.1600e-02 & 3.4000e-03 & 6.7019e-04  & 3.1452e-04\\
(1.0,1.0) & 4.4700e-02 & 6.4000e-03 & 5.0796e-04  & 6.0130e-04\\
\hline
\label{tab5}  
\end{tabular}
\end{center}
\begin{center}
\footnotesize
\captionof{table}{Norm error of LWOMM for values of $N=4,5,6,7$ of Example 3} 
\begin{tabular}{ cccccc }
\hline
Norm & $N=4$ & $N=5$ & $N=6$ & $N=7$ & Mittal and Bhatia \cite{mittal_2014}\\
\hline
$l^{2}$      & 6.0900e-02 & 7.5000e-03 & 1.0500e-03  & 2.7000e-03 & 1.6144e-03\\
$l^{\infty}$ & 4.4700e-02 & 6.4000e-03 & 6.7000e-04  & 1.4000e-04 & 3.6000e-04\\
\hline 
\label{tab6} 
\end{tabular}
\end{center}

\begin{center}
\footnotesize
\captionof{table}{Norm error of LWOMM for values of $N=4,5,6,7$ of Example 3} 
\begin{tabular}{ cccccc }
\hline
 \textbf{n} & \textbf{$x_{i}$}  & \textbf{$f(x_{i})$}  & \textbf{$f'(x_{i})$} & \textbf{$x_{i+1}$} & \textbf{$|x_{i+1}-x_{i}|$}  \\ 
\hline
 0 & $x_{0}=6.0$   & \textbf{$f(x_{0})$}=32.0 & \textbf{$f'(x_{0})$}=12.0 & \textbf{$x_{1}$}=3.33 & - \\ 
 1 & $x_{1}=3.33$  & \textbf{$f(x_{1})$}=7.09 & \textbf{$f'(x_{1})$}=6.66 & \textbf{$x_{2}$}=2.27 & 1.06 \\ 
 2 & $x_{2}=2.27$  & \textbf{$f(x_{2})$}=1.15 & \textbf{$f'(x_{2})$}=4.54 & \textbf{$x_{3}$}=2.01 & 0.26 \\ 
 3 & $x_{3}=2.01$  & \textbf{$f(x_{3})$}=0.04 & \textbf{$f'(x_{3})$}=4.02 & \textbf{$x_{4}$}=2.00 & 0.01 \\ 
\hline
\label{tab7}  
\end{tabular}
\end{center}
\section{Conclusion}
The telegraph equations are of special interest as they are used to understand various physical and complex phenomena. Two-dimensional hyperbolic telegraph equations (HTE) are usually difficult to solve analytically. In this article, Legendre wavelet matrix method (LWOMM) has been developed to tackle the two-dimensional HTE with Dirichlet boundary conditions. For this purpose, we constructed operational matrices based on Legendre wavelet for integration and differentiation for the solution of HTE. After implementing of LWOMM  on HTE, HTE converted into algebraic generalized Sylvester equation which is solved by  BICGSTAB method. The results of the numerical solution of Eq.(\ref{1})   for a small number of Legendre wavelet basis functions are illustrated in the form of $Tables ~\ref{tab1}-\ref{tab6}$ and the $Figures~ \ref{fig1}-\ref{fig12}$.  Also, we have comprised propose method  pointwise errors with the some existing method \cite{mittal_2014} for different values of  $\lambda_{1}, \lambda_{2}, k, M, k', M', k''$, and $M''$.  Numerical results of absolute errors, $l^{2}~ \&~ l^{\infty}$ errors are provided in $Tables~ \ref{1},\ref{tab3} ~\&~\ref{tab5} $ and $Table~\ref{tab2},\ref{tab4}~\&~\ref{tab6}$ respectively . Moreover, theoretically convergence analysis for the solution approximation is provided. So, we can say that the proposed method is very easy, gives better accuracy at less time and efficient at a small number of Legendre wavelet basis by using convergence analysis, absolute error, $l^{2}$ error,  $l^{\infty}$ error, error tables and error graphs.

\section*{Acknowledgement}
The first author acknowledge the financial support under National Postdoctoral Fellowship from Science and Engineering Research Board, India, with sanction file no. PDF/2019/001275.

\section*{References}

\end{document}